\documentclass[12pt,reqno]{amsart}
\usepackage{latexsym,amsmath,amssymb, amsthm, mathscinet}
\usepackage{cases, verbatim}
\usepackage{graphicx}
\usepackage{bmpsize}
\usepackage{float}

\usepackage{amsfonts,amsmath,amsthm, amssymb}
\usepackage{cases}
\usepackage[usenames]{color}
\usepackage{enumerate}
\usepackage{bm}

\theoremstyle{plain}
\newtheorem{thm}{Theorem}[section]

\newtheorem{lem}[thm]{Lemma}
\newtheorem{cor}[thm]{Corollary}
\newtheorem{prop}[thm]{Proposition}
\theoremstyle{remark}
\newtheorem{rem}{\bf{Remark}}
\numberwithin{equation}{section}

\newcommand{\R}{\mathbb{R}}

\newcommand{\T}{\mathbb{T}}
\newcommand{\Z}{\mathbb{Z}}

\newcommand{\AC}{{\rm AC\,}}

\newcommand{\al}{\alpha}

\newcommand{\ep}{\varepsilon}

\newcommand{\lam}{\lambda}

\newcommand{\Div}{{\rm div}\,}

\newcommand{\tr}{{\rm tr}\,}

\begin{document}
\baselineskip=15pt

\title[Vanishing discount and viscosity selection problems]
{Vanishing discount and viscosity selection problems for mechanical Hamilton--Jacobi equations}

\author{Hiroyoshi Mitake}
\address[H. Mitake]{
	Department of Mathematics, 
	Faculty of Science and Engineering, 
	Waseda University, 
	3-4-1 Okubo, Shinjuku-ku, Tokyo, 169-8555, Japan}
\email{mitake@waseda.jp}

\author{Panrui Ni}
\address[P. Ni]{
Department 1: Department of Mathematics, Faculty of Science and Engineering, Waseda University, 3-4-1 Okubo, Shinjuku-ku, Tokyo, 169-8555, Japan; Department 2: Shanghai Center for Mathematical Sciences, Fudan University, Shanghai 200438, China}
\email{panruini@gmail.com}

\author{Hung V. Tran}
\address[H. V. Tran]{
Department of Mathematics, University of Wisconsin Madison, Van Vleck Hall, 480 Lincoln Drive, Madison, Wisconsin 53706, USA}
\email{hung@math.wisc.edu}

\makeatletter
\@namedef{subjclassname@2020}{\textup{2020} Mathematics Subject Classification}
\makeatother

\date{\today}
\keywords{vanishing discount and viscosity selection; quadratic Hamiltonian; cell problems; ergodic problems; viscosity solutions}
\subjclass[2020]{
35B10,  
35B27, 
35B40, 
35F21,  
49L25}

\begin{abstract}
We study simultaneous vanishing discount and viscosity limits for mechanical Hamilton--Jacobi equations on the $n$-dimensional torus. 
In the critical regime, we explicitly identify the selected limit in terms of the critical Ma\~n\'e potential and the quadratic behavior of the potential near its
wells. 
We also obtain quantitative and sharp convergence estimates in the subcritical regime. 
In the supercritical regime, we prove convergence when
the local harmonic ground-state energy of the potential has a unique minimizer among the wells. 
Finally, we establish a concentration phenomenon of the associated Gibbs measures.
\end{abstract}

\date{\today}

\maketitle

\subjclass{}


\section{Introduction}
In this paper, we are interested in the vanishing discount and viscosity selection problems for a mechanical Hamilton--Jacobi equation.

Let $n\geq 1$, and let $\T^n=\R^n/\Z^n$ be the flat $n$-dimensional torus.
Our main focus is the asymptotic behavior of the solution $v^\ep$ of the critical-scaling vanishing discount and viscosity equation
\begin{equation}\label{eq:discount-viscous}
    \ep v^\ep-\ep\Delta v^\ep+|Dv^\ep|^2=V
    \qquad\text{in }\T^n
\end{equation}
as $\ep \to 0+$.
Here, \eqref{eq:discount-viscous} is of critical scaling, as the discount and the viscosity terms are both of order $\ep$ and they compete with each other in the limit.
We \textit{always} assume that the potential energy $V\in C^2(\T^n)$ satisfies
\[
\begin{cases}
V\geq 0 \qquad \text{on} \ \T^n,\\  
\{V=0\}=\{x_1,\ldots,x_m\},\\
D^2V(x_i)>0 \qquad (1\leq i\leq m).
\end{cases}   
\]
Since equation \eqref{eq:discount-viscous} is uniformly elliptic for every fixed $\ep>0$, its viscosity solution $v^\ep$ is classical. 
At each well $x_i$ of $V$ for $1\le i\le m$, define the positive definite symmetric matrix
\begin{equation}\label{eq:Ai-kappai}
    A_i:=\left(\frac{D^2V(x_i)}{2}\right)^{1/2},
    \qquad
    \kappa_i:=\tr A_i.
\end{equation}
We call $\kappa_i$ {\it the local harmonic ground-state energy} of $V$ at the well $x_i$.
Thus, in local Euclidean coordinates centered at $x_i$,
\begin{equation}\label{eq:V-quadratic-expansion}
    V(x_i+y)=y\cdot A_i^2y+o(|y|^2)
    \qquad\text{as }y\to 0.
\end{equation}

Equation \eqref{eq:discount-viscous} has a direct interpretation as the dynamic programming equation of an infinite-horizon discounted stochastic control problem.
The Lagrangian associated with the Hamiltonian $H(x,p)=|p|^2-V(x)$ is
\[
    L(x,q)=\frac{|q|^2}{4}+V(x).
\]
Then,
\begin{equation}\label{eq:control-representation-intro}
    v^\ep(x)
    =
    \inf_{\beta}
    \mathbb E_x\left[
        \int_0^\infty e^{-\ep t}
        L(X_t,\beta_t)\,dt
    \right],
    \qquad
    dX_t=\beta_t\,dt+\sqrt{2\ep}\,dW_t,
\end{equation}
where the infimum is taken over admissible controls.
See \cite{FS} for instance.
The representation formula \eqref{eq:control-representation-intro} makes the critical nature of the scaling transparent.
The parameter $\ep$ is simultaneously the discount rate and the diffusivity of the controlled process.  
Thus, the effective discounted time horizon is of order $\ep^{-1}$, whereas the noise has variance of order $\ep$ per unit time.  
Consequently, an $O(\ep)$ fluctuation cost per unit time accumulates to an $O(1)$ cost over the discounted horizon.

We also denote by $c(\ep)$ the unique constant for which the viscous ergodic problem
\begin{equation}\label{eq:ergodic}
    -\ep\Delta \chi^\ep+|D\chi^\ep|^2=V+c(\ep)\qquad\text{in }\T^n
\end{equation}
admits a periodic solution. The solution is unique up to additive constants. We will use the asymptotic formula, for $\kappa_*:=\min_{1\le i\le m}\kappa_i$,
\begin{equation}\label{eq:c-asymptotic-intro}
    c(\ep)=-\ep\kappa_*+o(\ep).
\end{equation}
See Appendix~\ref{app:known-results}.
After the Hopf--Cole transform, $-c(\ep)$ is the principal eigenvalue of the periodic Schr\"odinger operator $-\ep^2\Delta+V$. 
Near $x_i$, the operator is $-\ep^2\Delta+y\cdot A_i^2y+o(|y|^2)$, while the ground-state energy of $-\ep^2\Delta+y\cdot A_i^2y$ in $\R^n$ is $\ep\tr A_i=\ep\kappa_i$. Thus $\ep\kappa_i$ is the first-order ground-state energy associated with the quadratic approximation of $V$ at $x_i$.

The critical Ma\~n\'e potential is
\[
    d(y,x):=
    \inf_{T>0}\ 
    \inf_{\substack{\gamma\in \AC([0,T];\T^n)\\
                     \gamma(0)=y,\ \gamma(T)=x}}
    \int_0^T
    \left(\frac{|\dot\gamma(t)|^2}{4}+V(\gamma(t))\right)dt.
\]
Set
\begin{equation}\label{eq:v-star}
    v^*(x):=\min_{1\leq i\leq m}
    \bigl\{\kappa_i+d(x_i,x)\bigr\}.
\end{equation}
Note that $v^\ast$ is a viscosity solution to the cell problem (or the limit problem) 
\begin{equation}\label{eq:cell}
    |Dv|^2=V\quad\text{in} \ \T^n. 
\end{equation}

Here is the main result of our paper.
\begin{thm}\label{thm:critical-case}
Let $v^\ep$ be the solution of \eqref{eq:discount-viscous}, and $v^*$ be as in \eqref{eq:v-star}.
Then, uniformly for $x\in \T^n$,
\[
    \lim_{\ep\to 0}v^\ep(x)= v^*(x).
\]
\end{thm}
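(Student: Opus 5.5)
I would prove Theorem~\ref{thm:critical-case} by establishing the two inequalities $\limsup_{\ep\to0}v^\ep\le v^*$ and $\liminf_{\ep\to0}v^\ep\ge v^*$ uniformly. Both rely on two ingredients: the ergodic problem \eqref{eq:ergodic} with the asymptotic formula \eqref{eq:c-asymptotic-intro}, and the local structure of $\chi^\ep$ near the wells.

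\emph{Step 1 (a priori bounds and compactness).} Since $0\le V\le\|V\|_\infty$, the comparison principle gives $0\le v^\ep\le \|V\|_\infty/\ep$. This bound is too crude. Instead, I compare $v^\ep$ with $\chi^\ep-c(\ep)/\ep+C$. Since $\ep(\chi^\ep - c(\ep)/\ep)-\ep\Delta\chi^\ep+|D\chi^\ep|^2=V+\ep\chi^\ep$, and a Bernstein-type gradient bound gives $|D\chi^\ep|\le C$ uniformly, normalizing $\chi^\ep$ makes $\ep\chi^\ep=O(\ep)$. Hence $v^\ep=-c(\ep)/\ep+\chi^\ep+O(1)$, which is bounded by \eqref{eq:c-asymptotic-intro}. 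A Bernstein estimate on \eqref{eq:discount-viscous} also yields $|Dv^\ep|\le C$. By Arzel\`a--Ascoli, every subsequence has a uniformly convergent further subsequence. By stability, each limit $w$ is a viscosity solution of \eqref{eq:cell}, so it suffices to identify the limit. Since $\{V=0\}$ is the Aubry set and solutions of \eqref{eq:cell} are determined by their values on it, we have $w(x)=\min_i\{w(x_i)+d(x_i,x)\}$. The theorem therefore reduces to showing $w(x_i)=\kappa_i$ for each $i$; here one must take care that $w(x_i)\le w(x_j)+d(x_j,x_i)$ is automatic, so what is really needed is $w(x_i)=\min_j\{\kappa_j+d(x_j,x_i)\}$.

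\emph{Step 2 (the values at the wells).} For the upper bound I use the control representation \eqref{eq:control-representation-intro}. Starting from $x$, I follow a near-optimal path for $d(x_i,x)$ in reverse, which costs $d(x,x_i)+o(1)=d(x_i,x)+o(1)$ by reversibility of the Lagrangian, to reach $x_i$ in time $O(1)$. Then I use the linear feedback $\beta=-2A_i(X-x_i)$, i.e.\ an Ornstein--Uhlenbeck process. Its stationary law is Gaussian with covariance $\ep A_i^{-1}/2$, and its running cost is $\mathbb E[\tfrac14|2A_iY|^2+Y\cdot A_i^2Y]+o(\ep)=2\cdot\tfrac{\ep}{2}\tr A_i\cdot\tfrac12\cdot 2 \dots$. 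A direct computation gives stationary cost $\ep\kappa_i+o(\ep)$, so the discounted integral equals $\ep\kappa_i/\ep+o(1)=\kappa_i+o(1)$. Rare exits from a neighborhood of $x_i$ would be handled by large deviation (Freidlin--Wentzell) estimates, possibly by saturating the feedback outside a small ball. For the lower bound I use a PDE argument. Near $x_i$, in the scaled variable $y=(x-x_i)/\sqrt\ep$, I build a subsolution-type test function $\phi(x)=\kappa_i-\delta+\tfrac12(x-x_i)\cdot A_i(x-x_i)(1-\delta)$, for which $-\ep\Delta\phi+|D\phi|^2\le V-\ep\kappa_i(1-\delta)+\dots$ locally. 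Combining this with the global bound $v^\ep\ge\min_j\{\kappa_j+d(x_j,\cdot)\}-o(1)$ off small balls should allow a comparison argument that forbids $v^\ep(x_i)<\min_j\{\kappa_j+d(x_j,x_i)\}-\delta$. Equivalently, integrating \eqref{eq:discount-viscous} against the invariant (Gibbs-type) measure of the generator $\ep\Delta-2Dv^\ep\cdot D$ yields $\ep\int v^\ep\,d\mu^\ep=\int (V+|Dv^\ep|^2)\,d\mu^\ep$. Since the minimal value of $\int(V+|q|^2/4)$ under an invariant measure of the diffusion, localized near $x_i$, is $\ep\kappa_i+o(\ep)$ (the harmonic-oscillator ground state), this identity gives the needed lower bound.

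\emph{Main obstacle.} The hardest step is the lower bound at the wells: showing that no strategy can remain near the wells at cost below $\ep\kappa_*$ per unit time, while also pinning down which well is chosen, so that the limit is exactly $v^*$ rather than merely some solution of \eqref{eq:cell}. I would first try to derive this from the Schr\"odinger ground-state estimate of \eqref{eq:c-asymptotic-intro}, localized to each well by IMS localization. This gives local principal eigenvalues $\ep\kappa_i+o(\ep)$ for Dirichlet problems in small balls around $x_i$. I would then compare $v^\ep$ with the correspondingly localized Hopf--Cole transformed eigenfunctions.
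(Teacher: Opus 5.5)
Your compactness step, your reduction to showing $w(x_i)=v^*(x_i)$ at the wells for every subsequential limit $w$, and your control-theoretic upper bound are fine; the upper bound is essentially the paper's alternative proof in Appendix~B. The gap is the lower bound, which is the heart of the theorem.

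Your local comparison takes as input $v^\ep\ge\min_j\{\kappa_j+d(x_j,\cdot)\}-o(1)$ off small balls. That is exactly the statement $\liminf_{\ep\to0}v^\ep\ge v^*$ you are trying to prove, so the argument is circular. It also cannot be made to work locally. A barrier $a+\frac12 y\cdot Py$ on $B(x_i,r)$ gives a lower bound only if $v^\ep$ lies above it on $\partial B(x_i,r)$, that is, only if the limit grows quadratically away from $x_i$. (The correct barrier is $\tr P+\frac12 y\cdot Py$ with $P^2+\frac{\ep}{2}P=(A_i-\delta I)^2$; with your constant $\kappa_i-\delta$ the residual at $x_i$ is $\ep\delta(\kappa_i-1)$, which has the wrong sign when $\kappa_i>1$.) At a well with $v^*(x_i)=\kappa_k+d(x_k,x_i)<\kappa_i$, the limit decreases toward $x_k$, so no such barrier exists.

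The invariant-measure identity is correct, but it only gives $\int v^\ep\,d\mu^\ep\ge -c(\ep)/\ep=\kappa_*+o(1)$ for a single stationary measure. This is an averaged bound that sees only $\kappa_*$. It carries no pointwise information about $v^\ep(x_i)$, the individual $\kappa_j$, or $d(x_j,x_i)$. The IMS/Hopf--Cole suggestion runs into the same boundary-data problem. In addition, the discount turns the transformed equation into the nonlinear $-\ep^2\Delta\varphi+V\varphi=-\ep^2\varphi\log\varphi$.

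The missing idea is that you never need a lower bound at wells where the limit has no local minimum. For a subsequential limit $w$, start from $w(x)=\min_i\{w(x_i)+d(x_i,x)\}$ and use a chain argument. If $x_i$ is not a local minimum of $w$, then $w(x_i)=w(x_k)+d(x_k,x_i)$ for some $k\ne i$, so $w(x_k)<w(x_i)$ and $x_k$ also realizes the minimum at $x$. This lets you restrict the minimum to the set $Z$ of wells where $w$ has a local minimum.

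At $x_j\in Z$, Lemma~\ref{lem:cell problem at local min} (the stable-manifold argument) gives $w\in C^2$ near $x_j$ with $D^2w(x_j)=A_j$. Hence $(\theta-1)w$ has a strict local minimum at $x_j$ for $\theta>1$, and $\theta v^\ep-w$ has a local minimum at some $x_\ep\to x_j$. Inserting $\theta Dv^\ep(x_\ep)=Dw(x_\ep)$, $\theta\Delta v^\ep(x_\ep)\ge\Delta w(x_\ep)$ and $|Dw|^2=V$ into \eqref{eq:discount-viscous} gives $0\le\ep v^\ep(x_\ep)-\frac{\ep}{\theta}\Delta w(x_\ep)$. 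Dividing by $\ep$ and letting $\ep\to0$, then $\theta\downarrow1$, yields $w(x_j)\ge\kappa_j$. Therefore $w\ge\min_{x_l\in Z}\{\kappa_l+d(x_l,\cdot)\}\ge v^*$.

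The paper phrases this via $z^\ep=v^\ep+c(\ep)/\ep$ and also obtains equality on $Z$, but only the inequality is needed. This argument uses only local second-order information at the selected wells and requires no a priori boundary data, which is exactly what your local approaches lack.
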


\begin{rem}\label{rem:1}
In Theorem \ref{thm:critical-case}, the form of $v^*$ can be anticipated directly from the control representation.  
As noted, near a well $x_i$, the quadratic approximation is $V(x_i+y)\simeq y\cdot A_i^2y$.
The stabilizing feedback $\beta=-2Dv^\ep \simeq -2A_i y$ produces, at leading order, the
Ornstein--Uhlenbeck process
\[
    dY_t=-2A_iY_t\,dt+\sqrt{2\ep}\,dW_t.
\]
Its invariant covariance is $(\ep/2)A_i^{-1}$, and hence its mean running
cost is
\[
    \mathbb E\left[
        \frac{|\beta|^2}{4}+Y_t\cdot A_i^2Y_t
    \right]
    =
    \ep\tr A_i
    =
    \ep\kappa_i.
\]
Discounting at rate $\ep$ converts this into the order-one cost
\[
    \int_0^\infty e^{-\ep t}\ep\kappa_i\,dt=\kappa_i.
\]
On the other hand, the leading-order transient cost of reaching $x_i$ from $x$ is the critical Ma\~n\'e action $d(x_i,x)$.  
Thus, one expects the controller to choose the well minimizing
\[
    \kappa_i+d(x_i,x),
\]
which is precisely the function $v^*$ in \eqref{eq:v-star}.
See Appendix \ref{appendix: another proof} for some further information.
\end{rem}

The same argument, with only the coefficient of the discount term changed, gives the following corollary.

\begin{cor}\label{cor:critical}
    For each fixed $\eta>0$, consider the modified critical-scaling vanishing discount and viscosity equation
\[
    \ep \eta v^{\eta,\ep}-\ep\Delta v^{\eta,\ep}+|Dv^{\eta,\ep}|^2=V
    \qquad\text{in }\T^n.
\]
Then, uniformly for $x\in \T^n$,
\[
    \lim_{\ep\to 0}v^{\eta,\ep}(x)= v^\eta(x)=\min_{1\leq i\leq m}
    \left\{\frac{\kappa_i}{\eta}+d(x_i,x)\right\} .
\]
\end{cor}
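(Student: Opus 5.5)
The natural route is to reduce the corollary to Theorem~\ref{thm:critical-case} by a rescaling of the potential rather than to redo the whole argument. The discount coefficient $\ep\eta$ can be absorbed by changing the small parameter and the size of the Hamiltonian. Set $\delta:=\ep\eta$. Multiplying the equation by $\eta$ and putting $w:=\eta v^{\eta,\ep}$ gives
\[
\delta w-\ep\Delta w+\frac{1}{\eta}|Dw|^2=\eta V .
\]
This is not yet of the form \eqref{eq:discount-viscous}, because the viscosity coefficient is $\ep=\delta/\eta$ rather than $\delta$, and the gradient term carries the factor $1/\eta$. Writing $u:=w/\eta=v^{\eta,\ep}$ just undoes the step, so pure scaling does not reduce exactly to the theorem. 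I would therefore follow the proof of Theorem~\ref{thm:critical-case} line by line with the discount rate $\ep\eta$, as the sentence preceding the corollary indicates.

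\textbf{Step 1 (uniform bounds).} Compare with the ergodic problem \eqref{eq:ergodic}. The functions $\chi^\ep-\min\chi^\ep-c(\ep)/(\ep\eta)$ and $\chi^\ep-\max\chi^\ep-c(\ep)/(\ep\eta)$ are a supersolution and a subsolution of the modified equation, respectively. Together with \eqref{eq:c-asymptotic-intro} this gives $\ep\eta v^{\eta,\ep}\to\kappa_*\ep$ at leading order, so $v^{\eta,\ep}$ is bounded, with $\min v^{\eta,\ep}\approx\kappa_*/\eta$. Bernstein-type gradient bounds for $-\ep\Delta+|D\cdot|^2$ give equi-Lipschitz estimates. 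Any uniform subsequential limit $u$ is then a viscosity subsolution of \eqref{eq:cell}, hence $u(x)\le u(x_i)+d(x_i,x)$.

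\textbf{Step 2 (identifying the values at the wells).} Everything reduces to showing $u(x_i)=\kappa_i/\eta$ whenever $x_i$ is \emph{selected}, together with the general inequality $u(x_i)\le\kappa_i/\eta$. Once this is known, the comparison principle for the Ma\~n\'e potential (the Aubry set is $\{x_1,\dots,x_m\}$) forces $u(x)=\min_i\{u(x_i)+d(x_i,x)\}$, which is the claimed formula.

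For the upper bound $u(x_i)\le\kappa_i/\eta$, I would use the control representation \eqref{eq:control-representation-intro} with discount $\ep\eta$. Apply the feedback $\beta=-2A_i(X-x_i)$ near $x_i$, with a truncation. The Ornstein--Uhlenbeck computation of Remark~\ref{rem:1} gives a running cost of $\ep\kappa_i+o(\ep)$, and $\int_0^\infty e^{-\ep\eta t}\ep\kappa_i\,dt=\kappa_i/\eta$. The rare exits of the process from the neighborhood cost only $o(1)$.

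For the lower bound, I would integrate the equation against the Gibbs-type invariant measure. Use the Hopf--Cole transform $\phi=e^{-v/\ep}$, which is a supersolution to a Schr\"odinger equation, and test locally near each well against the harmonic ground state $e^{-y\cdot A_iy/(2\ep)}$. This yields $\ep\eta\,v^{\eta,\ep}(x_i)\ge\ep\kappa_i-o(\ep)$ whenever $v^{\eta,\ep}$ is locally near-minimal at $x_i$. Combined with the subsolution inequality, this forces the minimizing structure.

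The main obstacle is Step 2's lower bound: it must hold at $O(\ep)$ precision, localized at each well, despite the competition between wells. Every estimate in the proof of Theorem~\ref{thm:critical-case} involves the discount only through factors $1/(\ep\eta)$ or $e^{-\ep\eta t}$. Replacing $\ep$ by $\ep\eta$ there simply divides the well constants by $\eta$, and the proof carries over verbatim.
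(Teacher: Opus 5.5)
Your overall architecture is right, and the upper bound is fine. The Ornstein--Uhlenbeck feedback with discount $\ep\eta$ is the paper's Appendix~\ref{appendix: another proof} argument (It\^o's formula now produces $\tr B/\eta$). The main-text barrier of Lemma~\ref{lem:well-bound} also works with $P^2+\frac{\ep\eta}{2}P=A_\delta^2$ and the contradiction hypothesis $v^{\eta,\ep}(x_i)>\tr P/\eta$.

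The gap is in Step~2, which is where all the difficulty lies. First, ``selected'' is never defined. The sentence ``forces $u(x)=\min_i\{u(x_i)+d(x_i,x)\}$, which is the claimed formula'' is also not correct. That identity is the representation formula, valid for every solution of \eqref{eq:cell}, and $u(x_i)<\kappa_i/\eta$ genuinely occurs whenever $\kappa_k/\eta+d(x_k,x_i)<\kappa_i/\eta$ for some $k$. You need two things:
\begin{enumerate}
\item[(a)] $\eta u(x_i)\ge\kappa_i$ whenever the \emph{limit} $u$ has a local minimum at $x_i$;
\item[(b)] the chain argument of Proposition~\ref{prop:lower-bound}. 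If the index $i$ realizing the minimum at $x$ is not a local minimum of $u$, then $u(x_i)=u(x_k)+d(x_k,x_i)$ for some $k\neq i$. So $x_k$ also realizes the minimum and $u(x_k)<u(x_i)$, and iterating ends at a local-minimum well.
\end{enumerate}
Only with both does $u\ge\min_i\{\kappa_i/\eta+d(x_i,\cdot)\}$ follow.

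Second, (a) is asserted rather than proved, and as you set it up it does not close. The supersolution property $-\ep^2\Delta\phi+V\phi\ge0$ says nothing about $v$. The remainder $V-|A_iy|^2=o(|y|^2)$ has no sign. You conclude yourself that $O(\ep)$ control of $v$ would be needed, and then fall back on ``the proof carries over verbatim,'' which presupposes the very proof you are reconstructing.

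In the paper, (a) is Lemma~\ref{lem:key-lower-bound} applied to $z^\ep=v^{\eta,\ep}+c(\ep)/(\ep\eta)$. Lemma~\ref{lem:cell problem at local min} makes $z$ of class $C^2$ near a local-minimum well. Extrema of $\theta z^\ep-z$ for $\theta>1$ and $\theta<1$ produce the signed term $(\theta^{-2}-1)V$. Then $c(\ep)/\ep\to-\kappa_*$ gives $\eta z(x_j)=\kappa_j-\kappa_*$.

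Your Hopf--Cole idea can also be made to work, without any $O(\ep)$ precision. Take $B=A_i-\delta I$ so that $V\ge|By|^2$ on $B(x_i,r)$, and use the \emph{identity} $-\ep^2\Delta\phi+V\phi=\ep\eta v\phi$. Test it with $\zeta g$, where $g=e^{-y\cdot By/(2\ep)}$ and $\zeta$ is a cutoff equal to $1$ on $B(0,r/2)$. Dropping $\int\phi\zeta g\,(V-|By|^2)\ge0$ gives
\[
\tr B\int\phi\zeta g\le\eta\int v\,\phi\zeta g+C\int_{\{r/2\le|y|\le r\}}\phi g .
\]
Now $v^{\eta,\ep}+\frac12y\cdot By\to u+\frac12y\cdot By$ uniformly, and the limit has a strict minimum at $x_i$. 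So Laplace's principle concentrates the normalized weight $\phi\zeta g/\int\phi\zeta g$ at $x_i$ and kills the annulus term. This gives $\eta u(x_i)\ge\tr B$, and letting $\delta\downarrow0$ yields (a), without Lemma~\ref{lem:cell problem at local min} or the asymptotics of $c(\ep)$. None of this is in the proposal, however, so as written the decisive lower bound and the selection step are missing.
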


\begin{rem}\label{rem:critical case}
It is important to note that in the critical scaling case, the discount and the viscosity terms are both of order $\ep$, and they compete with each other in the limit.
We have the convergence results in Theorem \ref{thm:critical-case} and Corollary \ref{cor:critical} because the discount term stabilizes the process; see Remark \ref{rem:1}.
This is consistent with the fact that the vanishing discount process with convex Hamiltonians selects a unique limit (see, for instance, \cite{DFIZ} for the first-order equations, with earlier results in \cite{Gomes, ISM}, and \cite{MitakeTran, IMT} for the second-order equations).
The stabilization given by the discount term also plays an important role in Theorem \ref{thm: sum up} below.
However, with only viscosity terms, the vanishing viscosity process might fail to select a unique limit; see \cite{LTY}.

\end{rem}

It is also natural to study the vanishing discount and viscosity problems when they have different scalings.
A generalized form of \eqref{eq:discount-viscous} is
\begin{equation}\label{eq:alpha}
    \ep^\alpha w^\ep -\ep \Delta w^\ep + |Dw^\ep|^2= V \qquad \text{in } \T^n.
\end{equation}
Here, $\alpha>0$ is a given parameter.
Theorem \ref{thm:critical-case} covers the critical case where $\alpha=1$.
In the general Tonelli setting, convergence was proved in \cite{WZ} under the assumption that the viscosity coefficient $\ep(\lambda)$ satisfies $\ep(\lambda)/\lambda\to0$ as the discount coefficient $\lambda\to0^+$, while the regime $\ep(\lambda)=O(\lambda)$ was left open. Thus, the critical case $\alpha=1$ considered here lies precisely at this borderline. In the present mechanical setting we obtain convergence and identify the selected limit explicitly.
Another motivation for considering different relative scalings of the discount and viscosity terms comes from the broader question of how different approximation procedures select among the viscosity solutions of the cell problem. Selection principles for viscosity solutions and Mather measures were studied in \cite{Gomes}.
In \cite{NYZ}, different elementary solutions were obtained by varying the discount coefficient on the static classes, while \cite{CZ} showed that a discounted approximation combined with a small potential perturbation can select any prescribed solution of the ergodic problem. 
In the present problem, we also see different limits arise from the relative size of the discount and viscosity terms.

\medskip

If $0<\alpha<1$, we are in the subcritical case where the discount term dominates the diffusion term. 
The convergence of $\{w^\ep\}$ in this regime follows from \cite{WZ}. 
Set
\begin{equation}\label{func:u0}
u^0(x):=\min_{1\le i\le m}d(x_i,x).    
\end{equation}
    
Quantitative convergence rates for the first-order vanishing discount problem were studied in \cite{MitakeSoga} under additional dynamical assumptions, and sharp rates for hyperbolic Aubry sets were obtained in \cite{NiRate}.
In the present mechanical setting, we obtain quantitative estimates for $w^\ep\to u^0$, including a sharp rate when $1/2\le\alpha<1$; see Theorem~\ref{thm: sum up} (i).

\medskip

If $\alpha>1$, we are in the supercritical case where the diffusion term dominates the discount term.
The pure vanishing viscosity problem, which corresponds formally to the endpoint $\alpha=\infty$, was studied in \cite{AIPSM, JKM, LTY}.
Under the additional assumption \eqref{eq:unique-j}, which was the key condition imposed in \cite{AIPSM, JKM}, we prove that for every finite $\alpha>1$ the normalized solutions converge uniformly to $d(x_j,\cdot)$.
We sum up the convergence results in the following theorem.

\begin{thm}\label{thm: sum up}
We have the following conclusions.
\begin{enumerate}
    \item[{\rm(i)}] If $\alpha \in (0,1)$, then uniformly for $x\in \T^n$,
\[
    \lim_{\ep\to 0}\left(w^\ep(x)+\frac{c(\ep)}{\ep^\alpha}\right)= \lim_{\ep\to 0}w^\ep(x)=u^0(x)=\min_{1\le i\le m} d(x_i,x),
\]
and there exists $C>0$ such that
\begin{equation}\label{eq:subcritical-global-rate}
    \|w^\ep-u^0\|_{L^\infty(\T^n)}
    \le C\bigl(\ep^\alpha+\ep^{1-\alpha}\bigr)
\end{equation}
for all sufficiently small $\ep>0$.
Moreover, when $1/2\le\alpha<1$, there exist $c,C>0$ such that, for $\ep>0$ sufficiently small,
\[
    c\ep^{1-\alpha}\le \|w^\ep-u^0\|_{L^\infty(\T^n)}\le C\ep^{1-\alpha}.
\]

\item[{\rm(ii)}] If $\alpha>1$ and there exists a unique $j\in \{1,\ldots,m\}$ such that
\begin{equation}\label{eq:unique-j}
0<\kappa_j < \kappa_i \qquad \text{for all } i \in \{1,\ldots,m\}\setminus \{j\},
\end{equation}
then uniformly for $x\in \T^n$,
\[
    \lim_{\ep\to 0}\left(w^\ep(x)+\frac{c(\ep)}{\ep^\alpha}\right)= d(x_j,x).
\]

\end{enumerate}
\end{thm}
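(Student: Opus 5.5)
The plan is to reduce everything to the ergodic problem \eqref{eq:ergodic} and then run a comparison argument, using explicit sub- and supersolutions built from the Ma\~n\'e potential and from $\chi^\ep$.

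Write $w^\ep=-c(\ep)/\ep^\alpha+u^\ep$. Then $u^\ep$ solves
\[
\ep^\alpha u^\ep-\ep\Delta u^\ep+|Du^\ep|^2=V+c(\ep).
\]
By \eqref{eq:c-asymptotic-intro}, $c(\ep)/\ep^\alpha=-\kappa_*\ep^{1-\alpha}+o(\ep^{1-\alpha})$.

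\textbf{Part (i), upper bound.} We need a supersolution. Choose, for each well, a smooth local function $\phi_i$ that agrees with $d(x_i,\cdot)$ near $x_i$; there $d(x_i,x_i+y)\approx y\cdot A_iy$. Mollify $u^0$ at a scale $\delta$, which costs an $O(\delta)$ error in the equation. Near the wells the quadratic $y\cdot A_iy$ has $-\ep\Delta=-2\ep\kappa_i$, so the viscous error there is $O(\ep)$. This is absorbed by the discount term through the shift $u^0+C\ep^{1-\alpha}$, since $\ep^\alpha\cdot C\ep^{1-\alpha}=C\ep$. Away from the wells $V\ge c_0>0$, and $u^0$ can be replaced by a semiconcave strict subsolution-type perturbation with bounded Hessian. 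For the rate, use Lipschitz regularity of $u^0$ and the standard doubling-variables comparison. This should give
\[
w^\ep\le u^0+C(\ep^\alpha+\ep^{1-\alpha}).
\]
The $\ep^\alpha$ term comes from the vanishing-discount error, namely the distance between the discounted first-order solution and $u^0$. Since $u^0=0$ at the wells, $\ep^\alpha u^0$ is small where it matters.

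\textbf{Part (i), lower bound and convergence.} Use $\chi^\ep$ normalized so that $\chi^\ep(x_*)=0$ at a minimizing well. Standard vanishing-viscosity estimates give $\chi^\ep\to$ some solution of \eqref{eq:cell}, with a uniform Lipschitz bound. The comparison $\ep^\alpha w^\ep\ge \min(\chi^\ep)\ep^\alpha+c(\ep)$ combined with a local barrier argument near each $x_i$ yields $w^\ep\ge u^0-C\ep^{1-\alpha}-C\ep^\alpha$. For the sharp lower rate when $\alpha\ge1/2$, evaluate at a well $x_i$. Integrating the equation for $w^\ep$ against the invariant (Gibbs-type) measure of the adjoint linearization, or equivalently using the control representation \eqref{eq:control-representation-intro} with OU fluctuations as in Remark~\ref{rem:1}, shows that $w^\ep(x_i)\ge c\,\ep^{1-\alpha}\kappa_i$. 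This holds because the running cost near the well is at least $c\ep$ and the discounted horizon is $\ep^{-\alpha}$. Since $u^0(x_i)=0$, this gives $\|w^\ep-u^0\|\ge c\ep^{1-\alpha}$, and $\ep^\alpha\le\ep^{1-\alpha}$ for $\alpha\ge1/2$ gives the matching upper bound. The first identity in (i) follows because $c(\ep)/\ep^\alpha=O(\ep^{1-\alpha})\to0$.

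\textbf{Part (ii).} Now the discount is weak. Write
\[
u^\ep=\chi^\ep+\text{const}+r^\ep,\qquad \chi^\ep(x_j)=0.
\]
By the Hopf--Cole transform, $e^{-\chi^\ep/\ep}$ is the ground state of $-\ep^2\Delta+V$. Under \eqref{eq:unique-j} the ground state concentrates at $x_j$, and the known semiclassical result (Appendix~\ref{app:known-results}, cf.\ \cite{AIPSM,JKM}) gives $\chi^\ep\to d(x_j,\cdot)$ uniformly. It therefore remains to show $\operatorname{osc}(u^\ep-\chi^\ep)\to0$ and $u^\ep(x_j)\to0$.

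Integrate the $u^\ep$ equation against the invariant measure $m^\ep$ of the generator $-\ep\Delta+2D\chi^\ep\cdot D$, which is the Gibbs measure $\propto e^{-2\chi^\ep/\ep}$. The gradient terms combine into $|D(u^\ep-\chi^\ep)|^2\ge0$, which yields $\int u^\ep\,dm^\ep\le0$. The reverse inequality comes from the symmetric argument with the roles of $u^\ep$ and $\chi^\ep$ swapped, using convexity. Since $m^\ep$ concentrates at $x_j$ (this is the concentration phenomenon), together with equi-Lipschitz bounds we get $u^\ep(x_j)\to0$.

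\textbf{Main obstacle.} The key difficulty is showing that $u^\ep$ converges to $d(x_j,\cdot)$ rather than to another critical solution. I would compare $u^\ep$ with $\chi^\ep\pm\eta$ via the maximum principle on $\T^n\setminus B_\rho(x_j)$, using that there $\ep^\alpha\eta$ is negligible compared with the spectral-gap driven decay. Concretely, I would first try a Hopf--Cole linearization,
\[
\psi=e^{-(u^\ep-\chi^\ep)/\ep},
\]
together with the tunneling-gap estimate $\lambda_2-\lambda_1\gtrsim\ep$ guaranteed by \eqref{eq:unique-j}. This gap dominates the perturbation of size $\ep^{\alpha}$ for $\alpha>1$.
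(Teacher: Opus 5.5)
There is a genuine gap in each part.

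In (i), the hard half of \eqref{eq:subcritical-global-rate} is the lower bound $w^\ep\ge u^0-C(\ep^\alpha+\ep^{1-\alpha})$, and nothing you describe produces it. Comparison with $\chi^\ep$ only gives $w^\ep\ge\chi^\ep-\max\chi^\ep-c(\ep)/\ep^\alpha$. Up to $O(\ep^{1-\alpha})$, this is no better than the trivial bound $w^\ep\ge0$. Barriers near the wells say nothing about $w^\ep$ outside $\bigcup_iB(x_i,r)$. The obstruction is that $u^0$ is semiconcave but not semiconvex. Semiconcavity gives the easy half directly, because $u^0+C\ep^{1-\alpha}$ is then a viscosity supersolution of \eqref{eq:alpha}. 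No mollification is needed; for convex $H$, mollifying produces approximate subsolutions, not supersolutions. However, $u^0$ cannot serve as a subsolution of the viscous equation. Plain doubling of variables gives only $\|w^\ep-u^\lambda\|_{L^\infty}\le C\sqrt{\ep}/\ep^{\alpha}$, which does not even vanish for $\alpha\ge1/2$. That is exactly the range where the sharp two-sided rate is claimed.

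The paper imports this direction from \cite[Theorem~1.1]{WZ}: $\|w^\ep-u^\lambda\|_{L^\infty}\le C\max\{\ep/\lambda,\ep|\log\ep|\}$ with $\lambda=\ep^\alpha$. It combines this with $(1-C\lambda)u^0\le u^\lambda\le u^0$, which follows from $u^0\ge0$ and $u^0\le CV$. Your bound $w^\ep(x_i)\ge c\,\ep^{1-\alpha}$ at the wells is also only heuristic as written. The paper proves $w^\ep(x_i)=\kappa_i\ep^{1-\alpha}+o(\ep^{1-\alpha})$ by comparing $w^\ep$ on $B(x_i,r)$ with the explicit solutions $\frac{\ep}{\lambda}\tr P_\lambda(B)+\frac12 y\cdot P_\lambda(B)y$ of $\lambda Q-\ep\Delta Q+|DQ|^2=y\cdot B^2y$, where $B=A_i\pm\delta I$. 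The boundary ordering comes from $w^\ep\to u^0$ and $u^0(x_i+y)=\frac12y\cdot A_iy+o(|y|^2)$. Your expansion $y\cdot A_iy$ is off by this factor of two.

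In (ii), write $z^\ep$ for your $u^\ep$. The reduction to $\chi^\ep\to d(x_j,\cdot)$ via \cite{AIPSM} is the paper's. The upper bound is immediate: with $\min\chi^\ep=0$, $\chi^\ep$ is a supersolution of the $z^\ep$-equation, so $z^\ep\le\chi^\ep$. The real content is $\liminf z^\ep(x)\ge d(x_j,x)$ for every $x$, and this is not established.

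Your computation with $m^\ep\propto e^{-2\chi^\ep/\ep}$ is correct, but it yields $\ep^\alpha\int z^\ep\,dm^\ep=-\int|D(z^\ep-\chi^\ep)|^2\,dm^\ep\le0$. That is the easy direction, and only at $x_j$. The swapped version, with the invariant measure $\tilde m^\ep\propto e^{-2z^\ep/\ep}$, gives $\int z^\ep\,d\tilde m^\ep\ge0$. To use it you would need $\tilde m^\ep\rightharpoonup\delta_{x_j}$. That is a statement about $z^\ep$, not the known concentration for $\chi^\ep$, and it is precisely where \eqref{eq:unique-j} must enter. Moreover, invariant measures forget the starting point, so even $z^\ep(x_j)\to0$ would not by itself determine $z^\ep(x)$ for $x\neq x_j$. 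The Hopf--Cole/spectral-gap plan is not carried out either. A gap of order $\ep$ gives $L^2$ closeness of ground states, not uniform control of $-\ep\log$ of the eigenfunction.

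The missing device is the paper's discounted adjoint measure $\theta_x^\ep$, with source $\ep^\alpha\delta_x$. The action identity $\int(L+c(\ep))\,d\eta_x^\ep=\ep^\alpha z^\ep(x)=o(\ep)$ uses $\alpha>1$. It forces $\int V\,\theta_x^\ep\,dy=O(\ep)$, so weak limits live on the wells. Testing with $\psi_\delta$, where $|D\psi_\delta|^2\le V$ and $D^2\psi_\delta(x_i)=A_i-\delta I$, gives $\kappa_j\ge\sum_i\mu_i(\kappa_i-n\delta)$ for every weak limit $\sum_i\mu_i\delta_{x_i}$. Hence $\theta_x^\ep\rightharpoonup\delta_{x_j}$. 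Finally, convexity gives $\ep^\alpha z^\ep-\ep\Delta(z^\ep-\chi^\ep)+2Dz^\ep\cdot D(z^\ep-\chi^\ep)\ge0$. Integrating this against $\theta_x^\ep$ yields $z^\ep(x)\ge\chi^\ep(x)-\int\chi^\ep\,\theta_x^\ep\,dy\to d(x_j,x)$.
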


\begin{rem}\label{rem:two-parameter}
The power-law relation between the discount and viscosity parameters is not essential.
More generally, for each $\ep>0$, fix $\lambda_\ep>0$ such that $\lim_{\ep \to 0}\lambda_\ep=0$.
Let $u^{\lambda_\ep,\ep}$ solve
\[
    \lambda_\ep u^{\lambda_\ep,\ep}-\ep\Delta u^{\lambda_\ep,\ep}
    +|Du^{\lambda_\ep,\ep}|^2=V
    \qquad\text{in }\T^n.
\]
The same arguments give the following three regimes:
\[
\begin{cases}
 u^{\lambda_\ep,\ep}\to u^0
 \quad& \displaystyle \ \text{if} \  \frac{\lambda_\ep}{\ep}\to\infty,\\[2mm]
 u^{\lambda_\ep,\ep}\to
 \displaystyle\min_{1\le i\le m}\left\{\frac{\kappa_i}{\eta}+d(x_i,\cdot)\right\}
 \quad& \displaystyle \ \text{if} \  \frac{\lambda_\ep}{\ep}\to\eta\in(0,\infty),\\[3mm]
 u^{\lambda_\ep,\ep}+\dfrac{c(\ep)}{\lambda_\ep}\to d(x_j,\cdot)
 \quad& \displaystyle \ \text{if} \ \frac{\lambda_\ep}{\ep}\to0,
\end{cases}
\]
where the last conclusion is under \eqref{eq:unique-j}.
\end{rem}

As noted, in the supercritical case $\alpha>1$, we need to assume \eqref{eq:unique-j} to have the convergence of $w^\ep(x)+\frac{c(\ep)}{\ep^\alpha}$.
Without this condition, \cite{LTY} constructs a one-dimensional nonconvergence example for the pure vanishing viscosity process, which corresponds formally to $\alpha=\infty$ in \eqref{eq:alpha}.
We can adapt the method in \cite{LTY} to obtain a nonconvergence result in this supercritical setting as follows.

\begin{thm}\label{thm:nonconvergence}
Let $n=1$ and $\alpha>1$.
There exists a potential $V\in C^2(\T)$, with exactly two nondegenerate
zeroes, for which the normalized solutions
\[
 z^\ep:=w^\ep+\frac{c(\ep)}{\ep^\alpha}
\]
do not converge in $C(\T)$ as $\ep\to0$.
\end{thm}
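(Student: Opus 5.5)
Following \cite{LTY}, we take a one-dimensional potential $V$ with two wells $x_1,x_2$ of \emph{equal} harmonic energy $\kappa_1=\kappa_2$, so that \eqref{eq:unique-j} fails, and engineer the subleading behaviour of $V$ near the wells so that the ``effective'' preferred well switches infinitely often as $\ep\to0$. The natural first step is to reduce $z^\ep$ to the ergodic problem. Set $\lambda=\ep^\alpha$ and write $w^\ep=-c(\ep)/\ep^\alpha+\chi^\ep+r^\ep$, where $\chi^\ep$ solves \eqref{eq:ergodic}, normalized as below. Then $r^\ep$ solves a discounted linear-type equation $\ep^\alpha r -\ep\Delta r+ b^\ep\cdot Dr+|Dr|^2=-\ep^\alpha\chi^\ep$ with drift $b^\ep=2D\chi^\ep$. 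The corresponding representation shows that $z^\ep-\chi^\ep$ is approximately the constant $-\int\chi^\ep\,d\mu^\ep$, where $\mu^\ep$ is the invariant (Gibbs) measure of the diffusion $dX=-2D\chi^\ep dt+\sqrt{2\ep}dW$. Because $\alpha>1$, the discount time $\ep^{-\alpha}$ is much longer than the $O(\ep^{-1})$ mixing of the OU-type dynamics near the wells. This holds provided the tunnelling time between wells is not the issue, and that is handled by the constant-shift structure. In one dimension, Hopf--Cole gives $\mu^\ep\propto e^{-2\chi^\ep/\ep}\phi_\ep^{2}$-type densities explicitly, i.e. $\mu^\ep=\phi_\ep^2/\int\phi_\ep^2$ with $\phi_\ep$ the ground state of $-\ep^2\Delta+V$. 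Thus the limit of $z^\ep$ along a subsequence is $\lim(\chi^\ep-\int\chi^\ep d\mu^\ep)$, and it suffices to find $V$ for which this quantity has two distinct subsequential limits.

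Second, we analyze $\chi^\ep=-\ep\log\phi_\ep$ and $\mu^\ep$ for a tailored $V$. We choose $V$ symmetric-looking in its quadratic parts at $x_1,x_2$ (same $V''$), but with small higher-order perturbations. The splitting of the one-well energies is then $\ep\kappa+\ep^{2}\delta_i+\dots$, or at an even finer scale we can arrange it by a perturbation living at oscillating scales. Following \cite{LTY}, the cleanest device is to let $V$ differ near $x_1$ and $x_2$ in a way that the local ground energies $E_i(\ep)$ satisfy that the sign of $E_1(\ep)-E_2(\ep)$, compared to the tunnelling amplitude $e^{-d(x_1,x_2)/\ep}$, oscillates along two sequences $\ep_k,\ep_k'\to0$. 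One example is a $C^2$ but non-$C^\infty$ perturbation such as $|y|^{2}\cdot \tfrac{1}{\log}$-type oscillating terms, or a sum of bumps at dyadic scales. Then $\phi_\ep$ concentrates, and $\mu^\ep$ concentrates, at $x_1$ along $\ep_k$ and at $x_2$ along $\ep_k'$. By the standard WKB/large deviation analysis, $\chi^\ep-\chi^\ep(x_i)\to d(x_i,\cdot)$ when $\mu^\ep$ concentrates at $x_i$. Hence $z^{\ep_k}\to d(x_1,\cdot)$ and $z^{\ep_k'}\to d(x_2,\cdot)$. These differ, since $d(x_1,x_2)>0$ gives different values at $x_1$, so convergence fails.

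The main obstacle is the middle step: rigorously producing $C^2$ potentials with two nondegenerate wells whose ground-state concentration alternates. I would adapt the construction of \cite{LTY} essentially verbatim, because it concerns only $\chi^\ep$, i.e.\ the $\alpha=\infty$ problem. The genuinely new part is the comparison $z^\ep-(\chi^\ep-\int\chi^\ep d\mu^\ep)\to0$ for finite $\alpha>1$. For it, I would integrate the equation for $w^\ep-\chi^\ep$ against $\mu^\ep$, using the adjoint equation $\ep\Delta\mu+2\Div(\mu D\chi^\ep)=0$, to get $\int(w^\ep-\chi^\ep+c/\ep^\alpha)d\mu^\ep=-\ep^{-\alpha}\int|D(w^\ep-\chi^\ep)|^2d\mu^\ep\le0$. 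I would pair this with a comparison (sub/supersolution) argument showing that $w^\ep-\chi^\ep$ has oscillation $o(1)$ on $\T$. This last estimate uses Lipschitz bounds and the fact that $\ep^\alpha\|\chi^\ep\|_\infty\ll\ep$; it is where $\alpha>1$ enters. The inequality then needs to be upgraded to equality in the limit via the Gibbs concentration.
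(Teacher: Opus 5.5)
There is a genuine gap in your reduction to the ergodic ($\al=\infty$) problem. The estimate $\operatorname{osc}_{\T}(z^\ep-\chi^\ep)=o(1)$ does not follow from Lipschitz bounds and $\ep^\al\|\chi^\ep\|_\infty\ll\ep$. It is in fact false for the potential you propose to import.

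After Hopf--Cole, $U^\ep=e^{-z^\ep/\ep}$ is a positive, hence principal, eigenfunction of $-\ep^2\frac{d^2}{dx^2}+W_\ep$, where $W_\ep:=V-\ep^\al z^\ep$, with eigenvalue $E(\ep)=-c(\ep)$. Since $z^\ep$ differs by $O(1)$ between the two wells, the discount shifts the two local ground-state energies relative to each other by order $\ep^\al$. So what decides the selected well is the splitting $|E_1^D-E_2^D|$ of the local Dirichlet energies compared with $\ep^\al$, not compared with the tunnelling amplitude $e^{-d(x_1,x_2)/\ep}$. Correspondingly, the tunnelling time is exponentially long, so the diffusion does not mix between wells within the discount time $\ep^{-\al}$. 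There is therefore no reason for $z^\ep-\chi^\ep$ to be close to the single constant $-\int\chi^\ep\,d\mu^\ep$.

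In the quartic construction of \cite{LTY} the splitting is $O(\ep^2)$. Take $1<\al<2$ and a sequence along which $\chi^\ep\to d(x_1,\cdot)$. Suppose $z^\ep\to z:=d(x_1,\cdot)+C$, as your reduction predicts. Then min--max on a small interval around $x_2$, together with the IMS lower bound (the mass of $U^\ep$ concentrates at $x_1$), gives for every $\eta>0$, small $\ep$, and up to exponentially small errors,
\[ E_1^D-\ep^\al\bigl(z(x_1)+\eta\bigr)\le E(\ep)\le E_2^D-\ep^\al\bigl(z(x_2)-\eta\bigr). \]
This means $\ep^\al\bigl(d(x_1,x_2)-2\eta\bigr)\le E_2^D-E_1^D=O(\ep^2)$, which is impossible. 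So the verbatim adaptation fails for $1<\al<2$, and your oscillation estimate fails with it. Separately, pairing the equation for $r^\ep$ with $\mu^\ep$ actually gives $\int z^\ep\,d\mu^\ep=-\ep^{-\al}\int|D(z^\ep-\chi^\ep)|^2\,d\mu^\ep$; you dropped the term $\int\chi^\ep\,d\mu^\ep$.

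The missing idea is to design $V$ so that the energy gap dominates the discount, and then argue with the eigenvalue problem for $U^\ep$ rather than through $\chi^\ep$. The paper replaces the quartic perturbations by $\pm A|y|^{2\beta}\omega(|y|)$, with $\omega(r)=\sin(\log\log(1/r))$ and $1<\beta<\min\{\al,3/2\}$. This keeps $V\in C^2$ with $V''=2$ at both wells. The local Dirichlet energies become $\ep\pm AM_\beta\omega(\sqrt\ep)\ep^\beta+o(\ep^\beta)$, so the gap is of order $\ep^\beta\gg\ep^\al$ and changes sign along $\ep_k^\pm$. Since $\|W_\ep-V\|_\infty\le C\ep^\al=o(\ep^\beta)$, the perturbed local energies keep this alternating gap. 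Lemma 3.1 of \cite{LTY} then applies directly to $U^\ep$: exponential barrier away from the wells, IMS with gap $c_0\ep^\beta$, and rescaled Harnack with uniform constants. This yields $U^\ep(x_{\rm high})/U^\ep(x_{\rm low})\le Ce^{-\sigma/\ep}$, so $z^\ep(a)-z^\ep(0)$ has limsup at most $-\sigma$ along one sequence and liminf at least $\sigma$ along the other.

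Your passing suggestion of an oscillating $|y|^2/\log$-type term would, if carried out, plausibly give a splitting of order $\ep/\log(1/\ep)$, which would be large enough. However, you never identify $|E_1^D-E_2^D|\gg\ep^\al$ as the requirement. The $\chi^\ep$ reduction would still have to be replaced by an eigenvalue argument of this type.
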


Finally, under \eqref{eq:unique-j}, we have the convergence of the corresponding Gibbs measures, which shows a concentration of measure phenomenon.

\begin{thm}\label{thm:measure}
Fix $\alpha>0$.
Assume that \eqref{eq:unique-j} holds.
For $\ep\in (0,1)$, define
\begin{equation}\label{eq:rho}
    \rho^\ep(x):=\frac{e^{-w^\ep(x)/\ep}}{\displaystyle\int_{\T^n}e^{-w^\ep(y)/\ep}\,dy}.
\end{equation}
Then, in the sense of measures as $\ep \to 0$,
\begin{equation}\label{eq:rho-concentration-intro}
    \rho^\ep(x)\,dx\rightharpoonup\delta_{x_j}.
\end{equation}
\end{thm}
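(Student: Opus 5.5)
Since $\rho^\ep$ is invariant under adding constants to $w^\ep$, I will work with the normalized functions $z^\ep:=w^\ep+c(\ep)/\ep^\alpha$ and $\rho^\ep=e^{-z^\ep/\ep}/\int e^{-z^\ep/\ep}$. The idea is to show that, away from $x_j$, $z^\ep$ exceeds its value near $x_j$ by a positive amount at scale $\ep$. It does not suffice to know that $z^\ep$ converges uniformly to a limit with $\{\text{limit}=\min\}$ containing only $x_j$. The reason is that the limits $v^*$ (for $\alpha=1$) or $u^0$ (for $\alpha<1$) vanish, up to constants, at all the wells, so an $o(1)$ error in sup norm is not enough against the exponent $1/\ep$. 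I will therefore work with the Hopf--Cole transform $\phi^\ep:=e^{-w^\ep/\ep}$. Multiplying \eqref{eq:alpha} by $\ep^{-2}\phi^\ep$ gives
\[
-\ep^2\Delta\phi^\ep+V\phi^\ep=-\ep^{\alpha}\,\phi^\ep\,w^\ep ,
\]
that is, $\phi^\ep$ solves a Schr\"odinger-type equation with the bounded perturbation $-\ep^\alpha w^\ep$ in the potential.

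\textbf{Step 1: localization away from wells.} Fix $\delta>0$ and set $U_\delta:=\bigcup_i B(x_i,\delta)$. On $\T^n\setminus U_\delta$ we have $V\ge\mu_\delta>0$. By Theorem \ref{thm:critical-case}, Theorem \ref{thm: sum up}(i),(ii) and uniform Lipschitz bounds, $z^\ep$ is bounded and has an $O(1)$ uniform limit $z^0$, which is one of $v^*$, $u^0$ or $d(x_j,\cdot)$ up to an additive constant. Each of these is minimized only at the wells. Hence $z^\ep\ge \min z^\ep+\eta_\delta$ off $U_\delta$ for small $\ep$ and some $\eta_\delta>0$. The mass of $\rho^\ep$ off $U_\delta$ is therefore at most $e^{-\eta_\delta/(2\ep)}$ times the mass near the minimum, which in turn is $\gtrsim \ep^n$ by the Lipschitz bound. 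So $\rho^\ep$ concentrates on $U_\delta$. When $\alpha>1$, $z^0=d(x_j,\cdot)$ is strictly positive at the other wells, and the same argument already gives concentration at $x_j$.

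\textbf{Step 2: comparing wells for $\alpha\le1$.} This is the main obstacle. Here I need the finer statement that $z^\ep(x_i)-z^\ep(x_j)\ge c_0\ep$ with $c_0>0$, or at least the corresponding inequality for the masses in $B(x_i,\delta)$, which is what actually matters. I would first try a local blow-up at each well. Set $\psi_i^\ep(y):=(w^\ep(x_i+\sqrt\ep y)-w^\ep(x_i))/\ep$. Rescaling \eqref{eq:alpha} gives
\[
-\Delta\psi_i^\ep+|D\psi_i^\ep|^2=y\cdot A_i^2y+o(1)-\ep^{\alpha-1}w^\ep(x_i+\sqrt\ep y)+\ep^{\alpha-1}w^\ep(x_i).
\]
For $\alpha=1$ the right side tends to $y\cdot A_i^2 y-\lim\,(w^\ep(x_i+\sqrt\ep y)-w^\ep(x_i))=y\cdot A_i^2y$. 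The limit solution is then the harmonic ground state $\psi_i=y\cdot A_iy$ (up to a constant), with the constant fixed by $\kappa_i$. Consequently the local mass is
\[
\int_{B(x_i,\delta)}e^{-w^\ep/\ep}\approx e^{-w^\ep(x_i)/\ep}\,\ep^{n/2}\,\pi^{n/2}(\det A_i)^{-1/2}.
\]
It then remains to show $\liminf (w^\ep(x_i)-w^\ep(x_j))/\ep>0$, or that this difference tends to $+\infty$. For this I would use the next-order information from the control representation \eqref{eq:control-representation-intro}. The ergodic cost at well $i$ is $\ep\kappa_i$ per unit time, so starting at $x_i$ it pays to stay near $x_i$ for time of order $1/\ep$ unless it transfers to $x_j$. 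Integrating the equation against the invariant measure near each well, as in Appendix~\ref{app:known-results}, gives
\[
\ep^\alpha\,\frac{\int_{B_i} w^\ep\phi^\ep{}^2}{\int_{B_i}\phi^\ep{}^2}\approx -\ep\kappa_i+(\text{boundary flux}).
\]
The flux terms are exponentially small compared with $\ep$ because transfers between wells cost $d(x_i,x_j)>0$. This yields $w^\ep(x_i)\approx \kappa_i\ep^{1-\alpha}$, hence $w^\ep(x_i)-w^\ep(x_j)\approx(\kappa_i-\kappa_j)\ep^{1-\alpha}$. By \eqref{eq:unique-j} this is at least $c\,\ep^{1-\alpha}\ge c\,\ep$ when $\alpha\le 1$. (This is consistent with the rate $\ep^{1-\alpha}$ in Theorem \ref{thm: sum up}(i); for $\alpha=1$ it is exactly the $O(1)$ gap $\kappa_i-\kappa_j$ in $v^*$, but divided by $\ep$ in the Gibbs weight.)

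\textbf{Step 3: conclusion.} Combining the two steps, the mass of $\rho^\ep$ in $B(x_i,\delta)$ for $i\ne j$ is at most $C e^{-c(\kappa_i-\kappa_j)\ep^{-\alpha}}$ relative to the mass in $B(x_j,\delta)$, and the mass off $U_\delta$ is exponentially small. Since $\delta$ is arbitrary, $\rho^\ep\,dx\rightharpoonup\delta_{x_j}$.
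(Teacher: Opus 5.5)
Your Step 1 is fine, and for $\alpha>1$ it is exactly the paper's argument. The genuine gap is Step 2 for $0<\alpha<1$, which is the only case that needs real work.

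First, the well value $w^\ep(x_i)=\kappa_i\ep^{1-\alpha}+o(\ep^{1-\alpha})$ is only motivated by a flux heuristic, and the supporting computations are off. The Hopf--Cole transform gives $-\ep^2\Delta\phi^\ep+V\phi^\ep=+\ep^\alpha w^\ep\phi^\ep$, not $-\ep^\alpha w^\ep\phi^\ep$. The correctly rescaled equation is $-\Delta\psi_i^\ep+|D\psi_i^\ep|^2=y\cdot A_i^2y-\ep^{\alpha-1}w^\ep(x_i)-\ep^\alpha\psi_i^\ep+o(1)$. The constant $\ep^{\alpha-1}w^\ep(x_i)$ that you cancelled is precisely the unknown, so the blow-up as written is circular. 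The well value itself is Lemma~\ref{lem:w-ep-x-i}, which you could simply cite.

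More seriously, even with the well values in hand, the mass of $B(x_i,\delta)$ is governed by $\inf_{B(x_i,\delta)}w^\ep$, not by $w^\ep(x_i)$. You need $w^\ep\ge(\kappa_i-o(1))\ep^{1-\alpha}$ on the whole ball. A blow-up at scale $\sqrt\ep$ only controls $|x-x_i|\le R\sqrt\ep$. The a priori bound \eqref{eq:subcritical-global-rate}, with its uncontrolled constant, only gives $w^\ep\ge u^0-C(\ep^\alpha+\ep^{1-\alpha})$. That is useless in the annulus $R\sqrt\ep\le|x-x_i|\lesssim\ep^{\min\{\alpha,1-\alpha\}/2}$, where it cannot resolve the margin $(\kappa_i-\kappa_j)\ep^{1-\alpha}$. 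The lower bound on the mass near $x_j$ is harmless: the Lipschitz bound around $x_j$ plus the well value suffice. It is the upper bound near $x_i$, hence your Gaussian approximation and Step 3, that is unsupported.

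The missing ingredient is a barrier valid on the entire ball. With $\lambda=\ep^\alpha$, the paper compares $w^\ep$ on $B(x_i,r)$ with
$$Q_{\ep,\lambda,B}(y)=\frac\ep\lambda\tr P_\lambda(B)+\frac12y\cdot P_\lambda(B)y,\qquad B=A_i\pm\delta I.$$
Here $P_\lambda(B)$ from \eqref{def:matrix} solves $P^2+\frac\lambda2P=B^2$, so $Q_{\ep,\lambda,B}$ exactly solves $\lambda Q-\ep\Delta Q+|DQ|^2=y\cdot B^2y$. The ordering on $\partial B(x_i,r)$ comes from $w^\ep\to u^0$ and $u^0(x_i+y)=\frac12y\cdot A_iy+o(|y|^2)$. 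Comparison yields $w^\ep\ge\frac\ep\lambda\tr P_\lambda(A_i-\delta I)$ on all of $B(x_i,r)$ for $i\ne j$. It also yields $w^\ep\le\frac\ep\lambda\tr P_\lambda(A_j+\delta I)+\frac12y\cdot P_\lambda(A_j+\delta I)y$ near $x_j$. Gaussian integration then bounds the mass ratio by $C\exp\bigl(-[\tr P_\lambda(A_i-\delta I)-\tr P_\lambda(A_j+\delta I)]/\lambda\bigr)\to0$ once $\tr(A_j+\delta I)<\tr(A_i-\delta I)$.

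Finally, your premise about $\alpha=1$ is wrong: $v^*$ does not coincide up to constants at the wells. For $i\ne j$,
$$v^*(x_i)=\min_k\{\kappa_k+d(x_k,x_i)\}>\kappa_j=v^*(x_j),$$
because $\kappa_i>\kappa_j$ and $d(x_k,x_i)>0$ for $k\ne i$. So $x_j$ is the unique strict minimizer, and Step 1 alone settles $\alpha=1$, as in the paper. Routing $\alpha=1$ through Step 2 would actually be incorrect: there $w^\ep(x_i)\to v^*(x_i)$, which can be strictly less than $\kappa_i$.
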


Because of \eqref{eq:unique-j}, it is clear that the well at $x_j$ is the unique preferred well with the lowest ground-state energy $\ep \kappa_j=\ep \kappa_*$.
By essentially using the uniform convergence of $w^\ep$ with a scale translation given by Theorem \ref{thm: sum up}, we can establish \eqref{eq:rho-concentration-intro}. 
We also notice that by a Hopf--Cole transformation $\varphi^\ep:=e^{-\frac{w^\ep}{\ep}}$, \eqref{eq:alpha} turns out to be a logarithmic Schr\"odinger equation 
\begin{equation}\label{eq:log}
-\ep^2\Delta\varphi^\ep+V\varphi^\ep=-\ep^{1+\alpha}\varphi^\ep\log\varphi^\ep\qquad\text{in} \ \T^n. 
\end{equation}
Therefore, \eqref{eq:rho-concentration-intro} shows a concentration phenomenon of a scaling solution $\frac{\varphi^\ep}{\int_{\T^n}\varphi^\ep(y)\,dy}$ of \eqref{eq:log}. 
For related results on semiclassical measures associated with Schrödinger operators, see \cite{Yu}.
 



\subsection*{Organization of the paper}
The proof of Theorem \ref{thm:critical-case} is given in Section~\ref{sec:main}.
In Sections \ref{sec:subcritical} and \ref{sec:supercritical}, we prove Theorem \ref{thm: sum up}.
We give a sketch of the proof of Theorem \ref{thm:nonconvergence} in Section \ref{sec:supercritical}.
The proof of Theorem \ref{thm:measure} is given in Section~\ref{sec:cole-hopf}.
In Appendix \ref{app:known-results}, we discuss some known results used in this paper.
In Appendix \ref{appendix: another proof}, we give another proof of Proposition~\ref{prop:upper-bound} by using stochastic control.

\section{Proof of Theorem \ref{thm:critical-case}}\label{sec:main}

By the standard uniform Lipschitz estimate for coercive viscous Hamilton--Jacobi equations (e.g., \cite{LMT}), there is $C_0>0$, independent of $\ep\in(0,1)$, such that 
\[
\|Dv^\ep\|_{L^\infty(\T^n)}\le C_0.
\]

\subsection{Upper bound}
\begin{prop}\label{prop:upper-bound}
We have
\[
    \limsup_{\ep\to 0}v^\ep(x)\leq v^*(x)
    \qquad\text{for every }x\in\T^n.
\]
\end{prop}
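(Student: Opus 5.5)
Fix a well $x_i$. We show $\limsup_{\ep\to0}v^\ep(x)\le\kappa_i+d(x_i,x)$ by building, for each $i$, a smooth strict supersolution of \eqref{eq:discount-viscous} close to $\kappa_i+d(x_i,\cdot)$ and applying the comparison principle. Taking the minimum over $i$ then gives the claim, since the minimum of supersolutions is again a supersolution, or simply by comparing for each $i$ separately.

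\emph{Local construction near $x_i$.} In coordinates $y=x-x_i$, the function $\phi_i(y)=y\cdot A_iy$ solves $|D\phi_i|^2=4y\cdot A_i^2y\approx 4V$. The correct local profile is $\tfrac12 y\cdot A_iy$, since $|A_iy|^2=y\cdot A_i^2y$. Set $\psi(y)=\tfrac12 y\cdot A_iy$, so that $|D\psi|^2=y\cdot A_i^2y$ and $-\ep\Delta\psi=-\ep\kappa_i$. Define
\[
W^\ep(x)=\kappa_i(1+\delta)+\psi_\delta(x),
\]
where $\psi_\delta$ is a smooth approximation of $d(x_i,\cdot)$ built as follows. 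The function $d(x_i,\cdot)$ is a viscosity subsolution and solution of $|Du|^2=V$ away from $x_i$, and it behaves like $\tfrac12 y\cdot A_iy+o(|y|^2)$ near $x_i$, which is the standard stable-manifold / local calibration fact for nondegenerate wells. We want $\psi_\delta$ smooth, with $|D\psi_\delta|^2\ge V-\delta\,(V+\text{small})$ and $\Delta\psi_\delta\le\kappa_i(1+\delta)$ near $x_i$, and with $\|\psi_\delta-d(x_i,\cdot)\|_\infty\le\delta$.

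\emph{Checking the supersolution inequality.} We need
\[
\ep W^\ep-\ep\Delta W^\ep+|DW^\ep|^2-V\ge0 .
\]
Near $x_i$, with $\psi_\delta=(1-\delta')\tfrac12y\cdot A_iy$ in a small ball, we have $\ep W-\ep\Delta W\ge \ep\kappa_i(1+\delta)-\ep\kappa_i(1-\delta')>0$, while $|D\psi|^2-V\ge -\delta''|y|^2$. The latter is not automatically dominated. We therefore instead take $\psi_\delta=(1+\delta)\tfrac12 y\cdot A_iy$ locally, which gives $|D\psi|^2=(1+\delta)^2y\cdot A_i^2y\ge V$ for small $|y|$. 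Then $\ep W-\ep\Delta W=\ep\bigl(\kappa_i(1+\delta)^{\,2}+\psi-(1+\delta)\kappa_i\bigr)\ge0$ once the constant is chosen as $\kappa_i(1+\delta)$, since $\psi\ge0$. Away from $x_i$, we use a smooth strict subsolution-type approximation of $d(x_i,\cdot)$ with $|D\psi_\delta|^2\ge V+\eta_\delta$ outside the ball, where $\eta_\delta>0$. Then the $O(\ep)$ terms $-\ep\Delta\psi_\delta$ are absorbed for $\ep$ small.

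\emph{Main obstacle.} The hard step is producing a smooth function $\psi_\delta$ with $|D\psi_\delta|^2\ge V$ (strictly, away from $x_i$) that is within $O(\delta)$ of $d(x_i,\cdot)$ and equals $(1+\delta)\tfrac12y\cdot A_iy$ near $x_i$. Since $d(x_i,\cdot)$ is semiconcave away from $x_i$, it is only a viscosity supersolution with the needed inequality in a weak sense. We would first try to use $(1+\delta)d(x_i,\cdot)$, for which $|D\cdot|^2=(1+\delta)^2V>V$ where $V>0$, together with sup-convolution and mollification: the supersolution property $|Du|^2\ge V$ is preserved under inf-type regularizations combined with the Lipschitz bound. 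We would then glue in the quadratic near $x_i$ via a minimum of the two functions, which keeps the supersolution property. In this gluing, the expansion $d(x_i,x_i+y)=\tfrac12y\cdot A_iy+o(|y|^2)$ is used to ensure that the minimum is attained by the quadratic near $x_i$ and by the regularized Mañé potential far away. Comparison with the bounded classical solution $v^\ep$ then yields $v^\ep\le W^\ep$, and letting $\ep\to0$ and then $\delta\to0$ finishes the proof. If the regularization proves troublesome, a fallback is the stochastic control representation \eqref{eq:control-representation-intro}: follow a near-optimal path to $x_i$ and then apply the OU feedback $-2A_i y$, as in Remark \ref{rem:1}.
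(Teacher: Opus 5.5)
Your main route, a single global supersolution close to $\kappa_i+d(x_i,\cdot)$ compared with $v^\ep$ on all of $\T^n$, has a genuine gap at the \emph{other} wells $x_k$, $k\ne i$.

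First, the smooth $\psi_\delta$ you ask for cannot exist. A $C^1$ function on $\T^n$ that equals $(1+\delta)\frac12y\cdot A_iy$ on $B(x_i,r)$ and has $|D\psi_\delta|^2\ge V+\eta_\delta>0$ off that ball has no critical point except $x_i$. Yet it must attain its maximum at a critical point, and $x_i$ is a strict local minimum. A semiconcave version avoids this, since the supersolution test is vacuous at concave kinks. Note, though, that inf-convolution (not sup-convolution) preserves supersolutions, and mollification preserves subsolutions of convex equations, not supersolutions.

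The real obstruction is that near $x_k$ there is no margin. Since $V\to0$ there, for $|x-x_k|\ll\sqrt\ep$ the residual of $W$ is essentially $\ep(W-\Delta W)$. If optimal paths from $x_i$ pass through $x_k$, then $d(x_i,\cdot)=d(x_i,x_k)+d(x_k,\cdot)$ on a region adjacent to $x_k$. On that region $\Delta W\approx(1+\delta)\kappa_k$ by Lemma~\ref{lem:cell problem at local min}, while $W\approx(1+\delta)(\kappa_i+d(x_i,x_k))$. Hence $W$ is not a supersolution whenever $\kappa_k>\kappa_i+d(x_i,x_k)$.

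Concretely, take $n=1$, wells $0$ and $a$, $\kappa_0=0.1$, $\kappa_a=10$, and $\int_0^a\sqrt V\,dx=0.5<\int_a^1\sqrt V\,dx$. Then $d(0,a+y)=0.5+5y^2+o(y^2)$ for small $y>0$. The residual there is about $\ep(1+\delta)(0.6-10)<0$ for $0<y\ll\sqrt\ep$. In this example $v^*=\kappa_0+d(0,\cdot)$ near $a$, so taking the minimum over $i$ does not help. Regularizing $d(x_i,\cdot)$ does not help either, because the defect is a positive-definite Hessian, not a kink. A fix would need an $\ep$-dependent correction on the scale $|x-x_k|\sim\sqrt\ep$, which is where $v^\ep$ really deviates from $v^*$, and your plan provides none.

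The paper never does any finite-$\ep$ analysis at the other wells. It proves only $\limsup_{\ep\to0}v^\ep(x_i)\le\kappa_i$ (Lemma~\ref{lem:well-bound}), by a local argument in $B(x_i,r)$.
\begin{itemize}
\item Assume $v^\ep(x_i)>\tr P_{\ep,\delta}$, where $P_{\ep,\delta}^2+\frac{\ep}{2}P_{\ep,\delta}=A_\delta^2$.
\item Then $v^\ep(x_i)+\frac12y\cdot P_{\ep,\delta}y+\psi$ is a supersolution in the ball. The bump $\psi$ has height $C_0r+1$ and uses the uniform Lipschitz bound to dominate $v^\ep$ on $\partial B(x_i,r)$. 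Your local quadratic with a fixed constant cannot be compared there, since $v^\ep$ is unknown on the boundary.
\item Comparison makes $x_i$ a maximum of $v^\ep-W$, which contradicts the equation at $x_i$.
\end{itemize}
Propagation to general $x$ happens only in the limit. Every subsequential limit $v$ solves $|Dv|^2=V$, hence $v(x)\le v(x_i)+d(x_i,x)\le\kappa_i+d(x_i,x)$. The $O(\ep)$ second-order effects at the other wells never enter.

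Your stochastic-control fallback is a valid alternative; it is essentially the paper's second proof in the appendix. It sidesteps the other wells because the near-optimal path is followed only for a fixed time $T$. But as written it is one sentence, and the substance is missing:
\begin{itemize}
\item the feedback must use $B=A_i+\delta I$ rather than $A_i$, so that $V\le y\cdot B^2y$ locally;
\item the It\^o computation giving $\frac12y\cdot By+\tr B$ for the Ornstein--Uhlenbeck phase;
\item the crude bound $v^\ep\le\|V\|_{L^\infty}/\ep$;
\item the exit estimate $\sup_{|y|\le r/2}\mathbb{E}_y[e^{-\ep\tau_r}]=o(\ep)$, which makes that crude bound harmless.
\end{itemize}
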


In fact, to prove Proposition \ref{prop:upper-bound}, we only need to prove the following.

\begin{lem}\label{lem:well-bound}
For every $i\in\{1,\ldots,m\}$,
\begin{equation}\label{eq:well-limsup}
    \limsup_{\ep\to 0}v^\ep(x_i)\leq \kappa_i.
\end{equation}
\end{lem}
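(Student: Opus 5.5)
We prove the bound by comparison: build an explicit smooth supersolution of \eqref{eq:discount-viscous} that is small except near $x_i$, where its value is about $\kappa_i$, and apply the comparison principle. Since $v^\ep$ is classical and the operator $u\mapsto \ep u-\ep\Delta u+|Du|^2-V$ is proper with a strictly positive zeroth-order coefficient, any $C^2$ function $\psi$ with $\ep\psi-\ep\Delta\psi+|D\psi|^2\ge V$ on $\T^n$ satisfies $v^\ep\le\psi$. It is therefore enough, for each $\delta>0$, to find such a $\psi=\psi^\ep$ with $\psi^\ep(x_i)\le\kappa_i+C\delta$ for all small $\ep$.

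\emph{Construction near the well.} Fix $\delta>0$ and work in coordinates $y=x-x_i$. By \eqref{eq:V-quadratic-expansion} there is $r=r(\delta)>0$ such that $V(x_i+y)\le y\cdot (A_i^2+\delta^2 I)y$ for $|y|\le 2r$. Set $B:=(A_i^2+\delta^2 I)^{1/2}$; then $\tr B\le \kappa_i+C\delta$. Near the well we take the quadratic ansatz
\[
\psi^\ep(x_i+y)=\tr B+\tfrac12\, y\cdot By .
\]
Then $|D\psi|^2=y\cdot B^2y\ge V$ and $-\ep\Delta\psi=-\ep\tr B$. Also $\ep\psi\ge \ep\tr B$, since $y\cdot By\ge0$. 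Hence $\ep\psi-\ep\Delta\psi+|D\psi|^2\ge y\cdot B^2 y\ge V$ on $|y|\le 2r$. This is exactly the harmonic oscillator cancellation of Remark~\ref{rem:1}: the discount term $\ep\tr B$ pays for the Laplacian. It gives $\psi^\ep(x_i)=\tr B\le\kappa_i+C\delta$.

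\emph{Globalization (main obstacle).} The quadratic function does not extend to $\T^n$, so we must patch it to a global supersolution. Outside $B_r(x_i)$ we use $\ep\psi\ge V$, i.e.\ a large constant of size $\|V\|_\infty/\ep$. To make the transition we take $\psi=\min\{\psi_{\rm loc},\,M/\ep\}$ with $M>\|V\|_\infty$. Away from the well, $M/\ep$ is a supersolution because $\ep\cdot M/\ep=M\ge V$. Near the well, $\psi_{\rm loc}\le \tr B+\tfrac12|B|(2r)^2 <M/\ep$ for small $\ep$, so the minimum there is $\psi_{\rm loc}$. The trouble is that $\psi_{\rm loc}$ is bounded, so it never reaches $M/\ep$ inside $|y|\le2r$, and the minimum does not patch; we must therefore grow the local function rapidly in the annulus. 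For this we use $\psi_{\rm loc}=\tr B+\tfrac12 y\cdot By+\phi(|y|)$, where $\phi=0$ on $[0,r]$ and $\phi$ is convex, increasing and of size $\sim K(|y|-r)_+^2/\ep$. In the annulus, $|D\psi|^2$ gains $\phi'^2$, which is of order $K^2(|y|-r)^2/\ep^2$, while $-\ep\Delta\phi$ contributes the negative term $-\ep\phi''$, of order $-K$. A negative contribution of this size can be absorbed by the $\ep\phi$ and $\phi'^2$ terms except very near $|y|=r$. There it is instead absorbed by the slack $y\cdot B^2y-V\ge \delta^2|y|^2\ge\delta^2r^2$, provided $K\le\delta^2r^2/2$. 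With such a choice $\psi_{\rm loc}$ reaches $M/\ep$ at some $|y|<2r$, and the minimum is a viscosity supersolution; since it is a minimum of two supersolutions, it can be mollified, or comparison can be applied directly in the viscosity sense.

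\emph{Conclusion.} Comparison then gives $v^\ep(x_i)\le \psi^\ep(x_i)\le\kappa_i+C\delta$ for all sufficiently small $\ep$. Taking $\limsup_{\ep\to0}$ and then letting $\delta\to0$ yields \eqref{eq:well-limsup}. If the annulus bookkeeping turns out awkward, an alternative is to use the stochastic control representation \eqref{eq:control-representation-intro} with the feedback $\beta=-2By$, stopped on exit from $B_r$. The exit probability over the horizon $\ep^{-1}$ is exponentially small, and a crude bounded cost after exit contributes $o(1)$; this matches the approach of the appendix.
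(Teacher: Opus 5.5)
You are right that an explicit global supersolution built on the harmonic-oscillator quadratic $\tr B+\frac12 y\cdot By$ works, and this is a genuinely different route from the paper's. But the globalization step, which you correctly flag as the main obstacle, fails as you specify it. With $\phi=K(|y|-r)_+^2/\ep$ and $K$ constant, the term $-\ep\Delta\phi=-2K-2(n-1)K(|y|-r)/|y|$ is present all the way down to $|y|=r^+$. There $\phi$ and $\phi'$ vanish, so only the slack can absorb it. This forces $K\lesssim\delta^2r^2$, and since $K$ is constant, the restriction holds on the whole annulus. But then $\phi\le Kr^2/\ep\lesssim\delta^2r^4/\ep$ on $|y|\le 2r$. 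You need $\psi_{\mathrm{loc}}>M/\ep$ near $|y|=2r$ with $M>\|V\|_\infty\gtrsim r^2$, which is far larger. So $\psi_{\mathrm{loc}}$ never reaches the constant inside the chart, $\min\{\psi_{\mathrm{loc}},M/\ep\}$ does not coincide with $M/\ep$ near $|y|=2r$, and you get no supersolution on $\T^n$. A smaller point: $V\le y\cdot(A_i^2+\delta^2I)y$ gives zero slack, not $\delta^2|y|^2$; choose $r$ so that $V(x_i+y)\le y\cdot A_i^2y+\frac{\delta^2}{2}|y|^2$.

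The repair is a two-scale profile, which is what your remark about $\phi'^2$ absorbing away from $|y|=r$ actually requires. With $\sigma:=\delta^2r^2/2$, take $\ep\phi''=\sigma/4$ only on the layer $0\le|y|-r\le\sqrt\ep$, where the slack absorbs it. At the end of the layer $\phi'=\sigma/(4\sqrt\ep)$, so from then on $\phi'^2\ge\sigma^2/(16\ep)$ absorbs both $\ep(n-1)\phi'/|y|$ and $\ep\phi''=N$ for any fixed $N$. Taking $N$ a large fixed multiple of $M/r^2$ then makes $\phi>M/\ep$ on $7r/4\le|y|\le 2r$. Conclude by viscosity comparison, not mollification, since for convex equations mollification preserves subsolutions, not supersolutions. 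At a maximum point of $v^\ep-\psi$, the classical $v^\ep$ touches $\psi$ from below up to a constant, which gives $\ep(\psi-v^\ep)\ge0$ there. With this fix your argument is valid; your stochastic fallback is essentially the paper's Appendix B argument and would also work.

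The paper avoids the climb to height $\|V\|_\infty/\ep$ entirely. It works only in $B(x_i,r)$ and uses the uniform Lipschitz bound $\|Dv^\ep\|_{L^\infty}\le C_0$ to bound the boundary values by $v^\ep(x_i)+C_0r$. It anchors the barrier at the unknown value $v^\ep(x_i)$, using the exact Riccati matrix $P_{\ep,\delta}$, and reaches a contradiction by evaluating the equation at the touching point $x_i$. Its bump has height $C_0r+1$, so $\ep\Delta\psi=O(\ep)$ is absorbed trivially by the slack. Your route gains independence from the Lipschitz estimate and a direct explicit barrier, at the cost of the boundary-layer construction.
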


\begin{proof}
Fix $i\in\{1,\ldots,m\}$ and a small radius $r>0$. 
We work in $B(x_i,r)$ and write points as $x_i+y$.
For notational simplicity, set
\[
    A:=A_i,
    \qquad
    \kappa:=\kappa_i, 
\]
where $A_i$ and $\kappa_i$ are given by \eqref{eq:Ai-kappai}. 

Fix $\delta>0$ and define
\[
    A_\delta:=A+\delta I.
\]
The matrix
\[
    A_\delta^2-A^2=2\delta A+\delta^2I
\]
is positive definite. 
Let
\[
    \mu_\delta:=\lambda_{\min}(A_\delta^2-A^2)>0,
\]
where we denote $\lambda_{\min}(M)$ as the smallest eigenvalue of $M$ for a given symmetric matrix $M$.
By \eqref{eq:V-quadratic-expansion}, after choosing $r>0$ sufficiently small, we have, for $y\in B(0,r)$,
\begin{equation}\label{eq:strict-quadratic-majorant}
    y\cdot A_\delta^2y-V(x_i+y)
    \geq \frac{\mu_\delta}{2}|y|^2.
\end{equation}

For $\ep>0$, define
\[
    P_{\ep,\delta}
    :=\left(A_\delta^2+\frac{\ep^2}{16}I\right)^{1/2}
      -\frac{\ep}{4}I,
\]
which is a positive definite symmetric matrix.
Moreover,
\begin{equation}\label{eq:matrix-riccati}
    P_{\ep,\delta}^2+\frac{\ep}{2}P_{\ep,\delta}
    =A_\delta^2,
\end{equation}
and
\begin{equation}\label{eq:P-limit}
    P_{\ep,\delta}\longrightarrow A_\delta
    \qquad\text{as }\ep\to 0.
\end{equation}

Choose a nondecreasing function $\zeta\in C^\infty([0,\infty))$ such that
\[
    0\leq\zeta\leq 1,
    \qquad
    \zeta=0\text{ on }[0,1/3],
    \qquad
    \zeta=1\text{ on }[2/3,\infty).
\]
Recall that $\|Dv^\ep\|_{L^\infty(\T^n)}\le C_0$ uniformly in $\ep$.
Let $M=C_0r+1$ and
\[
    \psi(y):=M\zeta\left(\frac{|y|}{r}\right).
\]
It is clear that $\psi$ is smooth. 
On the annulus $r/3<|y|<2r/3$ where $D\psi$ may be nonzero,
\begin{equation}\label{eq:cross-positive}
    (P_{\ep,\delta}y)\cdot D\psi(y)
    =\frac{M}{r}\zeta'\left(\frac{|y|}{r}\right)
      \frac{y\cdot P_{\ep,\delta}y}{|y|}
    \geq 0.
\end{equation}

We claim that, for all sufficiently small $\ep$, one has
\begin{equation}\label{eq:finite-eps-well-bound}
    v^\ep(x_i)\leq \tr P_{\ep,\delta}.
\end{equation}
Assume by contradiction that
\[
    v^\ep(x_i)>\tr P_{\ep,\delta}.
\]
Define, for $|y|\leq r$,
\[
    W(y):=v^\ep(x_i)
           +\frac12y\cdot P_{\ep,\delta}y
           +\psi(y).
\]
We verify that $W$ is a supersolution of \eqref{eq:discount-viscous} in $B(x_i,r)$. 
By \eqref{eq:matrix-riccati},
\begin{align}
&\ep W-\ep\Delta W+|DW|^2-V(x_i+y)\notag\\
=\ &
  \ep\bigl(v^\ep(x_i)-\tr P_{\ep,\delta}\bigr)
  +y\cdot A_\delta^2y-V(x_i+y)\notag\\
&\qquad
  +\ep\psi-\ep\Delta\psi
  +2(P_{\ep,\delta}y)\cdot D\psi
  +|D\psi|^2
  \label{eq:W-residual}
\\
\ge\,& 
\Big(y\cdot A_\delta^2y-V(x_i+y)\Big)+\ep\psi-\ep\Delta\psi, 
\notag
\end{align}
where we use \eqref{eq:finite-eps-well-bound} and \eqref{eq:cross-positive} in the last inequality. 
We notice that because of the criticality of the competition between the discount and viscous terms, we can control the first term in \eqref{eq:W-residual}. 

On $|y|\leq r/3$ and on $2r/3\leq |y|\leq r$, the function $\psi$ is constant, so $\Delta\psi=0$. On the remaining annulus, $r/3< |y|< 2r/3$, estimate \eqref{eq:strict-quadratic-majorant} gives
\[
    y\cdot A_\delta^2y-V(x_i+y)
    \geq \frac{\mu_\delta r^2}{18}.
\]
Therefore, if
\[
    0<\ep\leq
    \frac{\mu_\delta r^2}
         {18\max\{1,\|\Delta\psi\|_{L^\infty(B(0,r))}\}},
\]
then the right-hand side of \eqref{eq:W-residual} is nonnegative throughout
$B(0,r)$.

Thus, $W$ is a supersolution in $B(x_i,r)$.
On $\partial B(x_i,r)$, the choice of $M$ gives
\[
    v^\ep(x_i+y)
    \leq v^\ep(x_i)+C_0r
    <v^\ep(x_i)+M
    \leq W(y).
\]
The comparison principle in $B(x_i,r)$ therefore yields, for $|y|\leq r$,
\begin{equation}\label{eq:v-below-W}
    v^\ep(x_i+y)\leq W(y).
\end{equation}

At $y=0$, equality holds in \eqref{eq:v-below-W}. Hence $v^\ep-W$ has a
local maximum at $0$. Since $\psi$ is identically zero near $0$,
\[
    DW(0)=0,
    \qquad
    \Delta W(0)=\tr P_{\ep,\delta}.
\]
It follows that
\[
    Dv^\ep(x_i)=0,
    \qquad
    \Delta v^\ep(x_i)\leq \tr P_{\ep,\delta}.
\]
Evaluating \eqref{eq:discount-viscous} at the well $x_i$, where
$V(x_i)=0$, gives
\[
    0
    =\ep v^\ep(x_i)-\ep\Delta v^\ep(x_i)
    \geq
    \ep\bigl(v^\ep(x_i)-\tr P_{\ep,\delta}\bigr)>0,
\]
which is a contradiction. 
Thus, \eqref{eq:finite-eps-well-bound} holds.

Let $\ep\to 0$ and use \eqref{eq:P-limit} to get
\[
    \limsup_{\ep\to 0}v^\ep(x_i)
    \leq \tr A_\delta
    =\tr A+n\delta
    =\kappa_i+n\delta.
\]
Finally, let $\delta\downarrow 0$ to obtain
\eqref{eq:well-limsup}.
\end{proof}

\begin{proof}[Proof of Proposition~\ref{prop:upper-bound}]
By the uniform Lipschitz bound and Lemma~\ref{lem:well-bound}, every sequence $\{\ep_k\}\to0$ has a subsequence $\{\ep_{k_j}\}\to0$ along which $v^{\ep_{k_j}}\to v$ uniformly. The stability of viscosity solutions gives $|Dv|^2=V$. Hence, for every $i$, the domination property gives $v(x)-v(x_i)\le d(x_i,x)$, and Lemma~\ref{lem:well-bound} yields $v(x)\le\kappa_i+d(x_i,x)$. Taking the minimum over $i$ proves the proposition.
\end{proof}

For a different proof of Proposition~\ref{prop:upper-bound} by stochastic control, see Appendix \ref{appendix: another proof}.

\subsection{Lower bound}
Set $z^\ep(x):=v^\ep(x)+\frac{c(\ep)}{\ep}$ for $x\in \T^n$, and then
\begin{equation}\label{eq:z-ep}
    \ep z^\ep - \ep \Delta z^\ep + |Dz^\ep|^2=V + c(\ep) \qquad \text{in } \T^n.
\end{equation}
Without loss of generality, we assume $\kappa_1=\min_{1\le i\le m} \kappa_i$.
Then, by Theorem \ref{thm:asymptotic-c-ep},
\[
\lim_{\ep \to 0} \frac{c(\ep)}{\ep}=-\min_{1\le i \le m} \kappa_i=-\kappa_1.
\]
To prove the lower bound, we have the following lemma.
\begin{lem}\label{lem:key-lower-bound}
Assume that $z^\ep \to z$ uniformly on $\T^n$ by passing to a subsequence if needed.
If $z$ has a local min at $x_j$ for some $j\in\{1,\ldots,m\}$, then
\[
z(x_j)=\kappa_j-\kappa_1.
\]
\end{lem}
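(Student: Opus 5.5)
The upper bound is immediate from earlier results: by Lemma~\ref{lem:well-bound} and $c(\ep)/\ep\to-\kappa_1$, we get $z(x_j)=\lim z^\ep(x_j)\le\kappa_j-\kappa_1$. Hence the work is the reverse inequality $z(x_j)\ge\kappa_j-\kappa_1$, and this is where the local-min hypothesis is used. I argue by contradiction: assume $z(x_j)\le\kappa_j-\kappa_1-3\tau$ for some $\tau>0$. The idea is the mirror image of the proof of Lemma~\ref{lem:well-bound}. Now $z^\ep$ is a supersolution of \eqref{eq:z-ep}, and I test it from below at an interior minimum point with a quadratic built from a slightly \emph{smaller} matrix. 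Set $A:=A_j$, fix small $\delta>0$ with $A_\delta:=A-\delta I>0$, and put $\mu_\delta:=\lambda_{\min}(A^2-A_\delta^2)>0$. Let $P=P_{\ep,\delta}$ be defined by the same formula as before, so that $P^2+\frac{\ep}{2}P=A_\delta^2$ and $\tr P\to\kappa_j-n\delta$. Choose $r$ so small that $y\cdot A_\delta^2y\le V(x_j+y)-\frac{\mu_\delta}{2}|y|^2$ and $V(x_j+y)\ge c_V|y|^2$ on $B(0,r)$, and so that $z\ge z(x_j)$ on $\overline B(x_j,r)$.

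The test function has a capped quadratic part. Take a smooth concave nondecreasing $G$ with $G(s)=s$ for $s\le a$, $G$ constant ($\le 2a$) for $s\ge 2a$, and $0\le G'\le 1$, and set $g(y):=G(\frac12y\cdot Py)$. Then $Dg=G'Py$, so $|Dg|^2\le G'|Py|^2\le G'\,y\cdot A_\delta^2y$, and $\Delta g\le G'\tr P$. To keep the minimum of $z^\ep-g$ away from $\partial B$, I subtract a barrier $\psi(y)=3a\,\zeta(|y|/r)$ with $\zeta$ as in Lemma~\ref{lem:well-bound}. The parameter $a$ is chosen so small, relative to $r^2\min\{\mu_\delta,c_V\}$, that two things hold. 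First, the cap region $\{\frac12 y\cdot Py\le 2a\}$ lies inside $B(0,r/3)$, so $g$ and $\psi$ have disjoint supports of gradient. Second, $|D\psi|^2+2|Dg||D\psi|\lesssim a^2/r^2+a$ is much smaller than the gap $\frac{\mu_\delta}{2}\cdot\frac{r^2}{9}$ available on the annulus. Let $y_\ep$ be a minimum point of $z^\ep(x_j+\cdot)-g+\psi$ on $\overline B(0,r)$. On $\partial B$ this function is $\ge z(x_j)-o(1)-2a+3a$, while at $0$ it equals $z(x_j)+o(1)$. Hence $y_\ep$ is interior for small $\ep$, and $z^\ep(x_j+y_\ep)\le z^\ep(x_j)+g(y_\ep)-\psi(y_\ep)\le z^\ep(x_j)+2a$.

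Apply the supersolution inequality at $y_\ep$ with the test function $g-\psi$:
\[
\ep z^\ep(x_j+y_\ep)-\ep\Delta(g-\psi)+|D(g-\psi)|^2\ge V+c(\ep).
\]
I split into three regions.
\begin{itemize}
\item In the inner region ($G'=1$, $\psi=0$) the left side is at most
\[
\ep\bigl(z^\ep(x_j)+\tfrac12 y\cdot Py-\tr P\bigr)+V-\tfrac{\mu_\delta}{2}|y|^2 .
\]
Under the contradiction hypothesis this is $\le V-\ep\kappa_1-\ep(3\tau-n\delta)+O(\ep|y|^2)-\frac{\mu_\delta}{2}|y|^2$, which is $<V+c(\ep)$ for $\delta<\tau/n$, small $\ep$ and small $r$. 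This is the step where the critical scaling and the asymptotics of $c(\ep)$ from Theorem~\ref{thm:asymptotic-c-ep} enter exactly.
\item In the cap-transition region, $|y|^2\gtrsim a$. The left side is at most $V-(1-G')V-G'\frac{\mu_\delta}{2}|y|^2+O(\ep)\le V-c\,a+O(\ep)$, again $<V+c(\ep)$.
\item On the annulus where $\psi$ varies, $Dg=0$, and the left side is at most $O(\ep)+|D\psi|^2$. Meanwhile $V+c(\ep)\ge c_Vr^2/9-O(\ep)$, so the choice of $a$ gives a contradiction.
\end{itemize}
Thus the minimum cannot exist in any region, which contradicts the assumption and yields $z(x_j)\ge\kappa_j-\kappa_1-3\tau$. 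Letting $\tau\to0$ finishes the proof.

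The main obstacle I expect is the bookkeeping of the cutoff. Unlike the supersolution construction in Lemma~\ref{lem:well-bound}, where $|D\psi|^2$ had the favorable sign, here the gradient-squared term of the localizing barrier works against us. The remedy is the ordering of parameters. Fix $\delta$ from $\tau$, then $r$ from $\delta$ and the local-min radius, then $a$ tiny compared with $r^2$, and finally $\ep\to0$. The capped quadratic $g$ is what makes it possible to take the barrier height $O(a)$ rather than $O(r^2)$, and I would double-check the Hessian sign $G''\le0$ and the convexity estimate $|Dg|^2\le G'|Py|^2$ carefully there.
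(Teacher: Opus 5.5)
Your proof is correct, but it takes a different route from the paper's. The paper first invokes Lemma~\ref{lem:cell problem at local min} (the stable-manifold regularity from \cite{AIPSM}) to get that $z$ is $C^2$ near $x_j$ with $D^2z(x_j)=A_j$. It then touches $z^\ep$ by $z/\theta$ itself: at a local minimum of $\theta z^\ep-z$ when $\theta>1$, and at a local maximum when $\theta<1$. The equation for $z^\ep$ together with $|Dz|^2=V$ gives $c(\ep)\le \ep z^\ep(x_\ep)-\frac{\ep}{\theta}\Delta z(x_\ep)$ (respectively $\ge$), with $\Delta z(x_\ep)\to\kappa_j$. So both inequalities come from one symmetric trick, and $\kappa_j$ enters through the Hessian of the limit.

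You instead get the upper bound directly from Lemma~\ref{lem:well-bound}, which in fact gives $z(x_j)\le\kappa_j-\kappa_1$ at every well with no local-min hypothesis. For the lower bound you build a finite-$\ep$ test function from below out of the Riccati matrix $P_{\ep,\delta}$ for $A_j-\delta I$. Thus $\kappa_j$ enters through $\tr P_{\ep,\delta}$, and the local minimum of $z$ is used only to trap the minimum of $z^\ep-g+\psi$ inside the ball.

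Your approach buys a self-contained comparison argument, parallel to Lemma~\ref{lem:well-bound}. It uses only the expansion \eqref{eq:V-quadratic-expansion} and never the $C^2$ regularity of $z$ at the well. The cost is the parameter ordering ($\delta$, then $r$, then $a\ll r^2$, then $\ep$) and the capped quadratic. The paper avoids both because its test function is built from $z$, which is already a $C^2$ solution of $|Dz|^2=V$ on a whole neighborhood of $x_j$, so no localization is needed.

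Two small bookkeeping remarks. First, $\Delta g\le G'\tr P$ is the unhelpful direction. You need an upper bound for $-\ep\Delta g=-\ep G'\tr P+\ep|G''|\,|Py|^2$, and concavity of $G$ makes this larger. It is harmless: on the transition region $|Py|^2\le 4a\lambda_{\max}(P)$, so the extra term is $O(\ep)$ for fixed $a$ and is absorbed by the gap $c\,a$, as your second bullet implicitly assumes. Second, the regions $\{\tfrac12 y\cdot Py\ge 2a,\ |y|\le r/3\}$ and $\{2r/3\le|y|<r\}$ are not listed. There $D(g-\psi)=0$, the left-hand side is $O(\ep)$, and $V\ge c\,a$, so they are immediate.
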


\begin{proof}
    Of course, $z$ solves the cell problem
    \[
    |Dz|^2=V \qquad \text{in } \T^n.
    \]
    By Lemma \ref{lem:cell problem at local min}, $z$ is $C^2$ in a neighborhood of $x_j$ and
    \[
    D^2z(x_j)=A_j, \qquad \Delta z(x_j)=\tr A_j=\kappa_j.
    \]
Thus, $z$ has a strict local minimum at $x_j$.

For $\theta>1$, $(\theta-1)z$ has a strict local minimum at $x_j$. Hence $\theta z^\ep-z$ has a local minimum at some $x_\ep\to x_j$, and
\[
    \theta Dz^\ep(x_\ep)=Dz(x_\ep),\qquad \theta\Delta z^\ep(x_\ep)\ge\Delta z(x_\ep).
\]
Using \eqref{eq:z-ep} and $|Dz|^2=V$,
\begin{align*}
    c(\ep)
    &\le \ep z^\ep(x_\ep)-\frac{\ep}{\theta}\Delta z(x_\ep)
      +\left(\frac1{\theta^2}-1\right)V(x_\ep)\\
    &\le \ep z^\ep(x_\ep)-\frac{\ep}{\theta}\Delta z(x_\ep).
\end{align*}
Dividing by $\ep$ and letting $\ep\to0$ gives $-\kappa_1\le z(x_j)-\kappa_j/\theta$. Letting $\theta\downarrow1$ yields $z(x_j)\ge\kappa_j-\kappa_1$.

For $0<\theta<1$, $(\theta-1)z$ has a strict local maximum at $x_j$. At a local maximum $x_\ep\to x_j$ of $\theta z^\ep-z$ the inequalities above are reversed, and
\[
    c(\ep)\ge \ep z^\ep(x_\ep)-\frac{\ep}{\theta}\Delta z(x_\ep)
      +\left(\frac1{\theta^2}-1\right)V(x_\ep)
    \ge \ep z^\ep(x_\ep)-\frac{\ep}{\theta}\Delta z(x_\ep).
\]
Thus $-\kappa_1\ge z(x_j)-\kappa_j/\theta$, and $\theta\uparrow1$ gives $z(x_j)\le\kappa_j-\kappa_1$.

\end{proof}

\begin{prop}\label{prop:lower-bound}
We have
\[
    \liminf_{\ep\to 0}v^\ep(x)\geq v^*(x)
    \qquad\text{for every }x\in\T^n.
\]
\end{prop}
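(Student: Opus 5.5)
Because of the uniform Lipschitz bound, it suffices to argue by subsequences. Take $\ep_k\to0$ along which $v^{\ep_k}$ attains $\liminf_{\ep\to 0}v^\ep(x)$ at the given point. Pass to a further subsequence so that $v^{\ep_k}\to v$ and $z^{\ep_k}\to z$ uniformly on $\T^n$. Since $c(\ep)/\ep\to-\kappa_1$ by Theorem~\ref{thm:asymptotic-c-ep}, we have $z=v-\kappa_1$. Moreover $z$ is a viscosity solution of $|Dz|^2=V$, so the claim is equivalent to
\[
z(x)\ \ge\ \min_{1\le i\le m}\{\kappa_i+d(x_i,x)\}-\kappa_1 .
\]

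The plan is to use Lemma~\ref{lem:key-lower-bound} together with the structure of solutions of the cell problem. The only points where the Lagrangian $\frac{|q|^2}{4}+V$ can have zero cost are the wells, so the projected Aubry set is $\{x_1,\dots,x_m\}$. Every viscosity solution of \eqref{eq:cell} is therefore determined by its values there:
\[
z(x)=\min_{1\le i\le m}\bigl\{z(x_i)+d(x_i,x)\bigr\}.
\]
This representation formula is standard and I would take it from Appendix~\ref{app:known-results}; it can also be proved directly, since $z$ is a subsolution and is backward-calibrated along curves that end up at the wells. Given the formula, it suffices to show
\[
z(x_i)+d(x_i,x)\ \ge\ \kappa_i-\kappa_1+d(x_i,x)\quad\text{for each } i ,
\]
or at least that the minimum is attained at wells for which $z(x_i)=\kappa_i-\kappa_1$.

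The key step is the following. Fix $x$ and let $i$ attain the minimum in the representation. Reorder the indices if necessary so that among all indices attaining the minimum we take one with $z(x_i)$ minimal, and then pass to a well $x_j$ with
\[
z(x_j)+d(x_j,x_i)=z(x_i).
\]
Such a $j$ exists by applying the representation formula at $x_i$; choose the $j$ for which $z(x_j)$ is smallest. I then claim that $z$ has a local minimum at $x_j$. Near $x_j$,
\[
z(y)=\min_k\{z(x_k)+d(x_k,y)\}.
\]
If some $k$ had $z(x_k)+d(x_k,x_j)<z(x_j)$, this would contradict the formula at $x_j$. If equality holds with $k\ne j$, then $z(x_k)<z(x_j)$ because $d(x_k,x_j)>0$, and this contradicts the minimality in the choice of $j$. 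Hence only $k=j$ is active at $x_j$, and by strict inequality and continuity the same holds in a neighborhood of $x_j$. There $z=z(x_j)+d(x_j,\cdot)\ge z(x_j)$, so $x_j$ is a local minimum.

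Lemma~\ref{lem:key-lower-bound} then gives $z(x_j)=\kappa_j-\kappa_1$. Combining with the chain $z(x)=z(x_j)+d(x_j,x_i)+d(x_i,x)$ and the triangle inequality for $d$,
\[
z(x)\ \ge\ \kappa_j-\kappa_1+d(x_j,x)\ \ge\ v^*(x)-\kappa_1 .
\]
Therefore $v(x)=z(x)+\kappa_1\ge v^*(x)$, which proves the proposition.

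The main obstacle is the combinatorial step of choosing a well where $z$ truly has a local minimum, and in particular handling ties among wells. I resolve it with the descent argument above: start at the well $x_i$ attaining the minimum for $x$, and follow $z(x_i)=z(x_j)+d(x_j,x_i)$ toward wells with strictly smaller values of $z$. Strictness holds because $d(x_k,x_l)>0$ for $k\neq l$. Since there are finitely many wells, the descent terminates at a well where $z$ has a local minimum, and the chain of equalities composes via the triangle inequality. A secondary point is to justify the representation formula itself, which I expect to cite from the weak KAM literature for mechanical Hamiltonians with finite Aubry set.
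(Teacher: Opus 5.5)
Your proposal is correct and follows essentially the same route as the paper: pass to a uniformly convergent subsequence of $z^\ep$, use the representation formula $z=\min_i\{z(x_i)+d(x_i,\cdot)\}$, descend along calibrated wells (with strictly decreasing values of $z$, since $d(x_k,x_l)>0$) to a well where $z$ has a local minimum, and conclude with Lemma~\ref{lem:key-lower-bound} and the triangle inequality. The only differences are cosmetic. You pick the terminal well by minimizing $z(x_j)$ among wells calibrated to $x_i$ instead of iterating. You should state explicitly that a competitor $k$ with $z(x_k)+d(x_k,x_j)=z(x_j)$ is itself calibrated to $x_i$, which follows from the triangle inequality. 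Finally, the representation formula is cited in the paper from \cite{LMT,Hung-book}, not from the appendix.
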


\begin{proof}
By the definition of $z^\ep$ and \eqref{eq:c-asymptotic-intro}, it is enough to show
\[
 \liminf_{\ep\to 0}z^\ep(x)\geq \min_{1\leq i\leq m}
    \bigl\{\kappa_i-\kappa_1+d(x_i,x)\bigr\}.
\]

Take an arbitrary sequence $\ep_k \downarrow 0$ such that $z^{\ep_k}\to z$ in $C(\T^n)$.
Of course, $z$ solves the cell problem \eqref{eq:cell}, and $z$ can have a local minimum only at one of $x_1,\ldots,x_m$.
Assume that $z$ has local minimum on $Z\subseteq\{x_1,\ldots,x_m\}$, and we write $Z=\{x_{l_1},\ldots,x_{l_p}\}$ for some $1\le p\le m$.
By Lemma \ref{lem:key-lower-bound}, we have
\[
z(x_{l_i})= \kappa_{l_i}-\kappa_1 \qquad \text{for }1\le i \le p.
\]
By the representation formula (see \cite{LMT,Hung-book} for instance),
\[
    z(x)=\min_{1\le i\le m}\{z(x_i)+d(x_i,x)\}.
\]
Fix $x\in\T^n$ and take an index $i\in\{1,\ldots,m\}$ realizing the minimum at $x$, i.e., $z(x)=z(x_i)+d(x_i,x)$. If $x_i\notin Z$, then there exists
$k\ne i$ such that
\[
    z(x_i)=z(x_k)+d(x_k,x_i).
\]
Indeed, otherwise
\[
    \delta:=\min_{k\ne i}
    \{z(x_k)+d(x_k,x_i)-z(x_i)\}>0.
\]
By continuity, for $y$ sufficiently close to $x_i$,
\[
    z(x_k)+d(x_k,y)\ge z(x_i)+\frac{3\delta}{4}
    \qquad\text{for all }k\ne i,
\]
while $d(x_i,y)\le \delta/4$. Hence, 
\[
    z(x_i)+d(x_i,y)
    <
    z(x_k)+d(x_k,y)
    \qquad\text{for all }k\ne i,
\]
and therefore
\[
    z(y)=z(x_i)+d(x_i,y)\ge z(x_i),
\]
which contradicts $x_i\notin Z$.

By the triangle inequality,
\[
    z(x_k)+d(x_k,x)
    \le z(x_k)+d(x_k,x_i)+d(x_i,x)
    =z(x),
\]
while the reverse inequality follows from the representation formula. Thus $x_k$ also realizes the minimum at $x$. If $x_k\notin Z$, we repeat the same argument with $x_k$ in place of $x_i$. Since $d(x_k,x_i)>0$ for $k\ne i$, we have $z(x_k)<z(x_i)$. Repeating the argument, after finitely many steps we reach a point of $Z$. Hence,
\[
\begin{aligned}
    z(x)
    &=\min_{1\le i\le p}
      \{z(x_{l_i})+d(x_{l_i},x)\}\\
    &=\min_{1\le i\le p}
      \{\kappa_{l_i}-\kappa_1+d(x_{l_i},x)\}
    \ge
      \min_{1\le i\le m}
      \{\kappa_i-\kappa_1+d(x_i,x)\}.
\end{aligned}
\]
\end{proof}

\section{The subcritical case}\label{sec:subcritical}

Let $\lambda=\ep^\alpha$ and let $u^\lambda$ solve
\begin{equation}\label{eq:first-order-discount}
    \lambda u^\lambda+|Du^\lambda|^2=V\qquad\text{in }\T^n.
\end{equation}
Since $u^0\ge0$, the comparison principle gives $u^\lambda\le u^0$.
Moreover, $u^0\le CV$ on $\T^n$: near each $x_i$, the line segment $t\mapsto x_i+ty$ gives $d(x_i,x_i+y)\le C|y|^2$, while $V(x_i+y)\ge c|y|^2$; away from the wells, $V$ has a positive lower bound. Thus, for all sufficiently small $\lambda>0$,
\[
    \lambda(1-C\lambda)u^0+|D((1-C\lambda)u^0)|^2-V
    \le \bigl(C\lambda(1-C\lambda)+(1-C\lambda)^2-1\bigr)V=-C\lam V
    \le0.
\]
By the comparison principle for \eqref{eq:first-order-discount}, 
\[
    (1-C\lambda)u^0\le u^\lambda\le u^0,
\]
and therefore
\begin{equation}\label{eq:Clam}
    \|u^\lambda-u^0\|_{L^\infty(\T^n)}\le C\lambda.
\end{equation}
By \cite[Theorem~1.1]{WZ},
\[
    \|w^\ep-u^\lambda\|_{L^\infty(\T^n)}
    \le C\max\left\{\frac{\ep}{\lambda},\ep|\log\ep|\right\}
    \le C\ep^{1-\alpha}.
\]
Combining this with \eqref{eq:Clam} gives \eqref{eq:subcritical-global-rate}.

\begin{lem}\label{lem:w-ep-x-i}
  Fix $\alpha\in (0,1)$.
  We have, for $1\le i \le m$,
  \begin{equation}\label{eq:well-asymptotic-intro}
    w^\ep(x_i)=\kappa_i\ep^{1-\alpha}+o(\ep^{1-\alpha}).
\end{equation} 
\end{lem}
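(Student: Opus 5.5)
The plan is to adapt the barrier argument of Lemma~\ref{lem:well-bound} to the scaling $\ep^\alpha w^\ep-\ep\Delta w^\ep+|Dw^\ep|^2=V$. The only change is that the matrix Riccati equation now carries the discount coefficient $\ep^\alpha$. Fix $i$, write $A=A_i$, $\kappa=\kappa_i$, and work in $B(x_i,r)$ with points $x_i+y$. For a symmetric matrix $A_\delta>0$, set
\[
P_\ep:=\Bigl(A_\delta^2+\tfrac{\ep^{2\alpha}}{16}I\Bigr)^{1/2}-\tfrac{\ep^\alpha}{4}I,
\qquad\text{so that}\qquad
P_\ep^2+\tfrac{\ep^\alpha}{2}P_\ep=A_\delta^2,\quad P_\ep\to A_\delta .
\]
For $W(y)=w^\ep(x_i)+\frac12 y\cdot P_\ep y\,(\pm\psi)$ one computes
\[
\ep^\alpha W-\ep\Delta W+|DW|^2-V(x_i+y)=\ep^\alpha\bigl(w^\ep(x_i)-\ep^{1-\alpha}\tr P_\ep\bigr)+y\cdot A_\delta^2y-V(x_i+y)+(\text{terms in }\psi).
\]
This is why the natural threshold is $\ep^{1-\alpha}\tr P_\ep$. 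In both directions $w^\ep-W$ will attain an extremum at $y=0$. There $DW(0)=0$, and the equation at $x_i$ reads $\ep^\alpha w^\ep(x_i)=\ep\Delta w^\ep(x_i)$. This yields a contradiction exactly as in Lemma~\ref{lem:well-bound}.

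\emph{Upper bound.} Take $A_\delta=A+\delta I$. Use the same cutoff $\psi=M\zeta(|y|/r)$ with $M=C_0r+1$, where $C_0$ is the uniform Lipschitz bound, which holds equally for \eqref{eq:alpha}. Suppose, for contradiction, that $w^\ep(x_i)>\ep^{1-\alpha}\tr P_\ep$. On the annulus the $\psi$-terms are at least $-\ep\|\Delta\psi\|_\infty$, because $\ep^\alpha\psi\ge0$, the cross term $2(P_\ep y)\cdot D\psi$ is nonnegative, and $|D\psi|^2\ge0$. This is absorbed by $\mu_\delta r^2/18$ from \eqref{eq:strict-quadratic-majorant}. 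Hence $W$ is a supersolution. On $\partial B$ we have $w^\ep\le W$, and comparison gives $w^\ep\le W$ in $B$, with equality at $0$. Therefore $\Delta w^\ep(x_i)\le\tr P_\ep$, and $0=\ep^\alpha w^\ep(x_i)-\ep\Delta w^\ep(x_i)>0$, a contradiction. Thus $\limsup_{\ep\to0}\ep^{\alpha-1}w^\ep(x_i)\le\kappa+n\delta$.

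\emph{Lower bound.} This is the step I expect to be the main obstacle. The mirror construction with $-\psi$ fails, because $|D\psi|^2$ has the wrong sign and is not small. Instead I will use no cutoff and control the boundary values through the already proved global convergence \eqref{eq:subcritical-global-rate}.

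Take $A_\delta=A-\delta I$ with $\delta$ small, so that $y\cdot A_\delta^2y-V(x_i+y)\le0$ on $B(0,r)$ for small $r$. Near $x_i$ we have $u^0=d(x_i,\cdot)$, since $d(x_k,x_i)>0$ for $k\ne i$. This function is $C^2$ near $x_i$ with Hessian $A$ by Lemma~\ref{lem:cell problem at local min}. Hence, shrinking $r$, on $\partial B(0,r)$,
\[
u^0(x_i+y)\ge \tfrac12 y\cdot(A-\tfrac{\delta}{2}I)y\ge \tfrac12 y\cdot P_\ep y+\tfrac{\delta r^2}{8}
\]
for $\ep$ small, because $P_\ep\to A-\delta I$.

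Combining \eqref{eq:subcritical-global-rate} with the upper bound $w^\ep(x_i)=O(\ep^{1-\alpha})$, we get on $\partial B$
\[
w^\ep(x_i+y)\ge u^0(x_i+y)-C(\ep^\alpha+\ep^{1-\alpha})\ge w^\ep(x_i)+\tfrac12 y\cdot P_\ep y=W(y)
\]
for small $\ep$. Now suppose $w^\ep(x_i)<\ep^{1-\alpha}\tr P_\ep$. Then the displayed identity (with $\psi\equiv0$) shows that $W$ is a subsolution in $B$. Comparison gives $w^\ep\ge W$ in $B$, with equality at $0$. Therefore $\Delta w^\ep(x_i)\ge\tr P_\ep$, and
\[
0=\ep^\alpha w^\ep(x_i)-\ep\Delta w^\ep(x_i)\le \ep^\alpha\bigl(w^\ep(x_i)-\ep^{1-\alpha}\tr P_\ep\bigr)<0,
\]
a contradiction. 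Hence $\liminf_{\ep\to0}\ep^{\alpha-1}w^\ep(x_i)\ge\kappa-n\delta$. Letting $\delta\downarrow0$ in both bounds gives \eqref{eq:well-asymptotic-intro}.
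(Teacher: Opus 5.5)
Your proof is correct, and your lower bound is essentially the paper's argument. The paper compares $w^\ep$ directly with the exact subsolution $Q_{\ep,\lambda,B_i^-}(y)=\frac{\ep}{\lambda}\tr P_\lambda(B_i^-)+\frac12 y\cdot P_\lambda(B_i^-)y$. Its constant already equals $\ep^{1-\alpha}\tr P_\lambda(B_i^-)$, so no contradiction hypothesis is needed. The boundary inequality comes, as in your write-up, from $w^\ep\to u^0$ and $u^0(x_i+y)=\frac12 y\cdot A_iy+o(|y|^2)$. The bound $w^\ep(x_i)=O(\ep^{1-\alpha})$ you invoke at that step is not needed, since $w^\ep(x_i)\to u^0(x_i)=0$ already.

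The genuine difference is the upper bound. The paper treats it symmetrically, using the exact supersolution $Q_{\ep,\lambda,B_i^+}$ with the same boundary control from \eqref{eq:subcritical-global-rate}; this costs two lines. You instead recycle the cutoff argument of Lemma~\ref{lem:well-bound}. That argument is purely local: it uses no information about the limit of $w^\ep$, and it gives $w^\ep(x_i)\le\ep^{1-\alpha}\tr P_\ep$ for every $\alpha>0$. However, it needs a uniform Lipschitz bound for \eqref{eq:alpha}, which the paper does not state for this equation. The bound does hold (by Bernstein's method, using $w^\ep\ge0$ and $\ep<1$), but you should justify or cite it rather than assert it.

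Your observation that the mirror construction with $-\psi$ fails, because $|D\psi|^2$ has the wrong sign, is exactly why any lower bound here must import global information. In the subcritical regime the a priori known limit $u^0$ supplies it. In the critical regime that information is unavailable, and the paper needs Lemma~\ref{lem:key-lower-bound} instead.
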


\begin{proof}
Fix $i$ and choose $0<\delta<\lambda_{\min}(A_i)$. 
Put $B_i^-=A_i-\delta I$ and $B_i^+=A_i+\delta I$. 
By \eqref{eq:V-quadratic-expansion}, after reducing $r>0$, for $|y|\le r$,
\[
    y\cdot(B_i^-)^2y\le V(x_i+y)\le y\cdot(B_i^+)^2y.
\]
For a positive definite symmetric matrix $B$ and $\lambda=\ep^\alpha$, set
\begin{equation}\label{def:matrix}
    P_\lambda(B):=\left(B^2+\frac{\lambda^2}{16}I\right)^{1/2}-\frac{\lambda}{4}I.
\end{equation}
Then,
\[
P_\lambda(B)^2+\frac{\lambda}{2}P_\lambda(B)=B^2. 
\]
Define
\[
    Q_{\ep,\lambda,B}(y):=\frac{\ep}{\lambda}\tr P_\lambda(B)+\frac12 y\cdot P_\lambda(B)y.
\]
A direct calculation gives 
\[
\lambda Q_{\ep,\lambda,B}-\ep\Delta Q_{\ep,\lambda,B}+|DQ_{\ep,\lambda,B}|^2=y\cdot B^2y.
\]

Since $u^0(x_i)=0=\min_{\T^n}u^0$, Lemma~\ref{lem:cell problem at local min} gives
\[
    u^0(x_i+y)=\frac12y\cdot A_i y+o(|y|^2), 
\]
where $u^0$ is given by \eqref{func:u0}. 
For fixed $\delta$, reduce $r$ if necessary so that on $|y|=r$,
\[
    \frac12y\cdot B_i^-y<u^0(x_i+y)<\frac12y\cdot B_i^+y.  
\]
By \eqref{eq:subcritical-global-rate}, $w^\ep\to u^0$ uniformly, while $\ep/\lambda=\ep^{1-\alpha}\to0$ and $P_\lambda(B_i^\pm)\to B_i^\pm$. Hence, for all sufficiently small $\ep$,
\[
    Q_{\ep,\lambda,B_i^-}\le w^\ep\le Q_{\ep,\lambda,B_i^+}\qquad\text{on }\partial B(x_i,r).
\]
The comparison principle gives the same inequalities in $B(x_i,r)$. Evaluating at $x_i$,
\[
    \frac{\ep}{\lambda}\tr P_\lambda(B_i^-)\le w^\ep(x_i)\le\frac{\ep}{\lambda}\tr P_\lambda(B_i^+).
\]
Divide by $\ep^{1-\alpha}=\ep/\lambda$, let $\ep\downarrow0$, and then $\delta\downarrow0$. This proves \eqref{eq:well-asymptotic-intro}.
\end{proof}

\begin{proof}[Proof of Theorem {\rm\ref{thm: sum up} (i)}]
Since we already have \eqref{eq:subcritical-global-rate} holds, 
we only need to prove an estimate from below. 
Fix $1/2\le \alpha<1$.
Then, \eqref{eq:subcritical-global-rate} implies
\[
\|w^\ep-u^0\|_{L^\infty(\T^n)}\le C\ep^{1-\alpha}.
\]
In light of \eqref{eq:well-asymptotic-intro}, for $1\le i\le m$ and $\ep>0$ sufficiently small,
\[
|w^\ep(x_i)-u^0(x_i)| = |w^\ep(x_i)|=\kappa_i\ep^{1-\alpha}+o(\ep^{1-\alpha}) \geq \frac{\kappa_i}{2}\ep^{1-\alpha}.
\]
Letting $c=\max_{1\le i\le m}\kappa_i/2$, we conclude that
\[
    c\ep^{1-\alpha}\le \|w^\ep-u^0\|_{L^\infty(\T^n)}\le C\ep^{1-\alpha}.
\qedhere
\]
\end{proof}

\section{The supercritical case}\label{sec:supercritical}

\subsection{Proof of Theorem \ref{thm: sum up}(ii)}
\begin{proof}[Proof of Theorem {\rm\ref{thm: sum up} (ii)}]
Set $z^\ep:=w^\ep+c(\ep)/\ep^\alpha$. Then
\begin{equation}\label{eq:z-general-alpha-pde}
    \ep^\alpha z^\ep-\ep\Delta z^\ep+|Dz^\ep|^2=V+c(\ep)\qquad\text{in }\T^n.
\end{equation}
Normalize $\chi^\ep$ in \eqref{eq:ergodic} by $\min_{\T^n}\chi^\ep=0$. 
By \cite[Lemma~2]{AIPSM}, there exists $C>0$ independent of $\ep\in (0,1)$ such that
\[
\|\chi^\ep\|_{L^\infty}+\|D\chi^\ep\|_{L^\infty}\le C.
\]
We have $\chi^\ep$ is a supersolution, and $\chi^\ep-\|\chi^\ep\|_{L^\infty}$ is a subsolution of \eqref{eq:z-general-alpha-pde}, respectively.
By the comparison principle and the Lipschitz estimate for $z^\ep$ (e.g., \cite[Lemma~2.2]{WZ}),
\[
    \|z^\ep\|_{L^\infty(\T^n)}+\|Dz^\ep\|_{L^\infty(\T^n)}\le C.
\]

Under \eqref{eq:unique-j}, we use \cite[Corollary~1]{AIPSM} to deduce that
\[
    \chi^\ep-\chi^\ep(x_j)\longrightarrow d(x_j,\cdot)\qquad\text{uniformly on }\T^n
\ \text{as} \ \ep\to0. 
\]
The one-dimensional case was studied first in \cite{JKM}.
Since $\min\chi^\ep=0$ and $\min d(x_j,\cdot)=d(x_j,x_j)=0$, it follows that $\chi^\ep(x_j)\to0$, and hence
\begin{equation}\label{eq:chi-limit}
    \chi^\ep\longrightarrow d(x_j,\cdot)\qquad\text{uniformly on }\T^n 
    \ \text{as} \ \ep\to0. 
\end{equation}
In particular, $z^\ep\le\chi^\ep$ yields 
\[
\limsup_{\ep\to0}z^\ep(x)\le d(x_j,x).
\]

For the reverse inequality, we use the nonlinear adjoint method introduced in \cite{EvansAdjoint}; see also \cite{CGMT, MitakeTran, TranAdjoint}. 
More precisely, we linearize the Hamilton--Jacobi equation \eqref{eq:z-general-alpha-pde} at $z^\ep$ and consider the corresponding adjoint equation in the distributional sense. 
Fix $x\in\T^n$ and let $\theta_x^\ep$ be the nonnegative distributional solution of
\begin{equation*}
\begin{cases}
      \ep^\alpha\theta-\Div(2Dz^\ep\theta)=\ep\Delta\theta+\ep^\alpha\delta_x \qquad \text{in } \T^n,\\  
      \int_{\T^n}\theta\,dy=1, 
\end{cases}
\end{equation*}
where we denote by $\delta_x$ the Dirac measure at $x$. 
Define the probability measure $\eta_x^\ep$ on $\T^n\times\R^n$ by 
\[d\eta_x^\ep(y,q):=\theta_x^\ep(y)\,dy\,\delta_{2Dz^\ep(y)}(dq).
\]
For every fixed $\phi\in C^2(\T^n)$,
\begin{equation}\label{eq:adjoint-identity-super}
    \int_{\T^n\times\R^n}
    (q\cdot D\phi-\ep\Delta\phi)\,d\eta_x^\ep
    =
    \ep^\alpha
    \left(
    \phi(x)-\int_{\T^n}\phi\,d\theta_x^\ep
    \right)
    =o(\ep).
\end{equation}
Taking $\phi=z^\ep$ in the first equality of
\eqref{eq:adjoint-identity-super}, and using
$q=2Dz^\ep$ and \eqref{eq:z-general-alpha-pde}, we obtain
\[
\begin{aligned}
\ep^\alpha\left(
z^\ep(x)-\int_{\T^n}z^\ep \theta_x^\ep\,dy
\right)
&=\int_{\T^n\times\R^n}(q\cdot Dz^\ep-\ep\Delta z^\ep)\, d\eta_x^\ep
\\ &=
\int_{\T^n}
\left(2|Dz^\ep|^2-\ep\Delta z^\ep\right)
\theta_x^\ep\,dy\\
&=\int_{\T^n}(|Dz^\ep|^2+V + c(\ep)-\ep^\alpha z^\ep)\theta^\ep_x\,dy
\\
&=
\int_{\T^n\times\R^n}
\left(L(y,q)+c(\ep)-\ep^\alpha z^\ep(y)\right)
\,d\eta_x^\ep .
\end{aligned}
\]
Therefore,
\begin{equation}\label{eq:action-super}
    \int_{\T^n\times\R^n}
    \left(L(y,q)+c(\ep)\right)\,d\eta_x^\ep
    =
    \ep^\alpha z^\ep(x)
    =
    o(\ep),
\end{equation}
where the last equality follows from the uniform bound on $z^\ep$ and
$\alpha>1$.
 Since $c(\ep)=-\ep\kappa_j+o(\ep)$, we have 
 \begin{align*}
&\int_{\T^n\times\R^n}V(y)\,d\eta^\ep_x\le 
 \int_{\T^n\times\R^n} L(y,q)\,d\eta_x^\ep=\ep\kappa_j+o(\ep)\\
 \Longrightarrow & \ \int_{\T^n\times\R^n} V\,d\eta_x^\ep=\int_{\T^n} V\,d\theta^\ep_x=O(\ep).
 \end{align*}
 Hence, every weak limit of $\pi_\#\eta_x^\ep=\theta^\ep_x\,dy$ is supported on $\{x_1,\ldots,x_m\}$.

Fix $\delta>0$ so small that $A_i-\delta I>0$ for every $i$. 
Choose $r>0$ small enough such that $\{B(x_i,r)\}_{i=1}^m$ are disjoint and $|(A_i-\delta I)y|^2\le V(x_i+y)$ in $B(x_i,r)$, and set 
\[
s_i(y)=\frac12y\cdot(A_i-\delta I)y.
\]
Choose $s^*>0$ sufficiently small such that $\{y:s_i(y)<s^*\}\Subset B(x_i,r)$, and $h\in C_c^\infty([0,\infty))$ such that $h(s)=s$ near $0$, $|h'|\le1$, and $\operatorname{supp}h\subset[0,s^*)$. 
Define $\psi_\delta(x_i+y)=h(s_i(y))$ in $B(x_i,r)$ for $1\le i \le m$, and $\psi_\delta=0$ elsewhere. 
Then, $\psi_\delta\in C^2(\T^n)$, $D^2\psi_\delta(x_i)=A_i-\delta I$ for $1\le i \le m$, and
\[
    |D\psi_\delta|^2\le V \qquad \text{in } \T^n.
\]
By the Cauchy-Schwarz inequality,
\[
L(y,q)=|q|^2/4+V(y)\ge q\cdot D\psi_\delta(y).
\]
If $\pi_\#\eta_x^{\ep_k}=\theta^{\ep_k}_x\rightharpoonup\sum_{i=1}^m \mu_i\delta_{x_i}$, where $\mu_i\geq 0$ and $\sum_{i=1}^m \mu_i=1$, then \eqref{eq:adjoint-identity-super}, and \eqref{eq:action-super} give
\[
    \kappa_j\ge\sum_{i=1}^m \mu_i\Delta\psi_\delta(x_i)=\sum_{i=1}^m \mu_i(\kappa_i-n\delta).
\]
Letting $\delta\downarrow0$ and using \eqref{eq:unique-j}, we obtain $\mu_j=1$. 
Thus,
\begin{equation}\label{eq:eta-concentration}
    \pi_\#\eta_x^\ep\rightharpoonup\delta_{x_j}.
\end{equation}

It remains to compare $z^\ep$ and $\chi^\ep$. Put $f^\ep=z^\ep-\chi^\ep$. From convexity,
\[
    |Dz^\ep|^2-|D\chi^\ep|^2\le 2Dz^\ep\cdot Df^\ep,
\]
and subtraction of the equations for $z^\ep$ and $\chi^\ep$ gives
\[
    \ep^\alpha z^\ep-\ep\Delta f^\ep+2Dz^\ep\cdot Df^\ep\ge0.
\]
Multiplying by $\theta_x^\ep$ and using \eqref{eq:adjoint-identity-super} with $\phi=f^\ep$ yields
\[
    z^\ep(x)\ge \chi^\ep(x)-\int_{\T^n\times\R^n}\chi^\ep\,d\eta_x^\ep.
\]
By \eqref{eq:chi-limit} and \eqref{eq:eta-concentration}, the last integral converges to $d(x_j,x_j)=0$. 
Hence, 
\[
\liminf_{\ep\to0}z^\ep(x)\ge d(x_j,x),
\]
which together with the upper bound proves the assertion for $\alpha>1$.
\end{proof}

We now give a sketch of the construction of the nonconvergence result.

\subsection{A nonconvergence result in one dimension}
Under the assumption $V\in C^2$, the construction of Liu--Tran--Yu \cite[Theorem~1.2]{LTY} can be adapted to give a nonconvergence example for every fixed $\alpha>1$. 

We explain the modifications of the construction and the localization argument in \cite{LTY} and give a sketch of the proof of Theorem \ref{thm:nonconvergence}.
\begin{proof}[Sketch of the proof of Theorem {\rm\ref{thm:nonconvergence}}]
Recall that $w^\ep$ solves
\[
\ep^\alpha w^\ep-\ep(w^\ep)''+|(w^\ep)'|^2=V
 \qquad\text{in }\T.
\]
Then, $z^\ep$ solves
\[
\ep^\alpha z^\ep-\ep(z^\ep)''+|(z^\ep)'|^2=V+c(\ep)
 \qquad\text{in }\T.
\]
Choose
\[
 1<\beta<\min\{\alpha,3/2\},\qquad a=\frac12,
 \qquad A>0.
\]
For $0<r<\rho$, where $\rho\in(0,1/20)$ will be chosen sufficiently
small, set
\[
 \omega(r):=\sin\bigl(\log\log(1/r)\bigr).
\]
Denote by
\[
g(y):=
\begin{cases}
|y|^{2\beta}\omega(|y|)\qquad &y\ne0,\\
0 \qquad &y=0.
\end{cases}
\]
Since $2\beta>2$, we have $g\in C^2(( -\rho,\rho))$ and
$g(0)=g'(0)=g''(0)=0$.
Replace the oscillatory quartic perturbations in
\cite[Section~1.2, equations~(3)--(5)]{LTY} by $\pm Ag$, and choose, for $ |y|<\rho$,
\begin{equation*}
 \begin{aligned}
 V(y)&=y^2+A|y|^{2\beta}\omega(|y|),\\
 V(a+y)&=y^2-A|y|^{2\beta}\omega(|y|).
 \end{aligned}
\end{equation*}
Choose $\rho$ so that $A\rho^{2\beta-2}\leq1/2$, and extend $V$ smoothly and positively away from these two neighborhoods. Then
\[
\begin{cases}
  V\in C^2(\T;[0,\infty)),\\
  \{V=0\}=\{0,a\},\\ 
  V''(0)=V''(a)=2.
\end{cases}
\]
In particular, both wells are nondegenerate and $\kappa_0=\kappa_a=1$.
Changing the power of the perturbation is also the idea used in
\cite[Remark~4.1]{LTY}; here we use a power between $2$ and $3$ to
retain $C^2$ regularity and obtain an energy gap larger than the discount.

Let $E_0^D(\ep)$ and $E_a^D(\ep)$ be the first Dirichlet eigenvalues of
\[
 P_\ep:=-\ep^2\frac{d^2}{dx^2}+V
\]
on $(-\rho,\rho)$ and $(a-\rho,a+\rho)$, respectively. The harmonic
approximation in \cite[Lemma~2.1]{LTY}, with $|y|^{2\beta}$ replacing
$y^4$, gives
\begin{equation}\label{eq:rem-C2-local-energies}
 \begin{aligned}
 E_0^D(\ep)&=\ep+A M_\beta\omega(\sqrt\ep)\ep^\beta
                    +o(\ep^\beta),\\
 E_a^D(\ep)&=\ep-A M_\beta\omega(\sqrt\ep)\ep^\beta
                    +o(\ep^\beta),
 \end{aligned}
\end{equation}
where $M_\beta:=\frac{\Gamma(\beta+1/2)}{\sqrt\pi}>0$.
To verify \eqref{eq:rem-C2-local-energies}, set $x=x_i+\sqrt\ep\,y$. The local
operator becomes
\[
 \ep\left(-\frac{d^2}{dy^2}+y^2
       \mathbin{\pm}A\ep^{\beta-1}|y|^{2\beta}
                           \omega(\sqrt\ep|y|)\right).
\]
Its normalized ground state converges in $L^2(\R)$, after extension by
zero, to $\psi_0(y)=\pi^{-1/4}e^{-y^2/2}$. 
The quadratic lower bound and the weighted estimate used in \cite[proof of Lemma~2.1, equations~(13)--(17)]{LTY} give uniformly Gaussian tails, and therefore uniform control of the $2\beta$-moments. 
Moreover, for fixed $y \ne 0$,
\[
 \omega(\sqrt\ep|y|)-\omega(\sqrt\ep)\longrightarrow0.
\]
The Rayleigh upper and lower bounds from that proof give
the first correction
\[
 \mathbin{\pm}A\ep^\beta\omega(\sqrt\ep)
       \int_{\R}|y|^{2\beta}\psi_0(y)^2\,dy
 +o(\ep^\beta).
\]
The integral equals $M_\beta$, proving
\eqref{eq:rem-C2-local-energies}.

We next account for the discount term. 
We have $E(\ep)=-c(\ep)$ is the principal eigenvalue of $P_\ep$, and $0<E(\ep)\leq C\ep$.
The Hopf--Cole transform $U^\ep=e^{-z^\ep/\ep}$ satisfies
\begin{equation}\label{eq:rem-C2-perturbed-operator}
 \left(-\ep^2\frac{d^2}{dx^2}+W_\ep\right)U^\ep
       =E(\ep)U^\ep,
 \qquad W_\ep:=V-\lambda z^\ep.
\end{equation}
Because $U^\ep>0$, it is a principal eigenfunction of the operator in
\eqref{eq:rem-C2-perturbed-operator}. 
As $\lam=\ep^\al$,
\[
 \|W_\ep-V\|_{L^\infty}\leq C\ep^\alpha=o(\ep^\beta).
\]
The Rayleigh characterization of the local Dirichlet eigenvalues
$\widetilde E_i^D(\ep)$ for this perturbed operator gives
\begin{equation}\label{eq:rem-C2-perturbed-energies}
 |\widetilde E_i^D(\ep)-E_i^D(\ep)|\leq C\ep^\alpha
                  =o(\ep^\beta),\qquad i\in\{0,a\}.
\end{equation}
In particular, the discount perturbation preserves the alternating
energy gap in \eqref{eq:rem-C2-local-energies}.

Choose the same sequences as in \cite[Section~4, proof of
Theorem~1.2]{LTY}:
\begin{equation}\label{eq:rem-C2-sequences}
 \ep_k^+:=\exp\!\left[-2\exp\!\left(\frac\pi2+2\pi k\right)\right],
 \qquad
 \ep_k^-:=\exp\!\left[-2\exp\!\left(\frac{3\pi}2+2\pi k\right)\right].
\end{equation}
Then, $\omega(\sqrt{\ep_k^+})=1$ and
$\omega(\sqrt{\ep_k^-})=-1$. By
\eqref{eq:rem-C2-local-energies} and
\eqref{eq:rem-C2-perturbed-energies}, there exists $c_0>0$ such that,
for all sufficiently large $k$,
\begin{equation*}
 \begin{aligned}
 \widetilde E_0^D(\ep_k^+)-\widetilde E_a^D(\ep_k^+)
        &\geq c_0(\ep_k^+)^\beta,\\
 \widetilde E_a^D(\ep_k^-)-\widetilde E_0^D(\ep_k^-)
        &\geq c_0(\ep_k^-)^\beta.
 \end{aligned}
\end{equation*}

We explain why the proof of \cite[Lemma~3.1]{LTY} applies to the
$\ep$-dependent potential $W_\ep$. Normalize the eigenfunction in
\eqref{eq:rem-C2-perturbed-operator} by
$\Psi^\ep=U^\ep/\|U^\ep\|_{L^2(\T)}$.
First, outside fixed neighborhoods of the two wells, $W_\ep$ has a
uniform positive lower bound for all small $\ep$; globally,
$W_\ep\geq-C\ep^\alpha$. Since $E(\ep)=O(\ep)$ and $\alpha>1$,
the weighted eigenfunction identity in
\cite[proof of Lemma~3.1, Step~1]{LTY} gives
\begin{equation}\label{eq:rem-C2-barrier}
 \int_{\T\setminus O_r}|\Psi^\ep|^2\,dx\leq Ce^{-s_r/\ep},
 \qquad O_r:=(-r,r)\cup(a-r,a+r),
\end{equation}
for each fixed sufficiently small $r>0$ and some $s_r>0$. 

Second, apply the partition-of-unity argument in
\cite[proof of Lemma~3.1, Step~2, equations~(26)--(30)]{LTY}.
Write $x_{\mathrm{low}}$ and $x_{\mathrm{high}}$ for the lower- and
higher-energy wells along either sequence in
\eqref{eq:rem-C2-sequences}. If $m_{\mathrm{low}}^\ep$ and
$m_{\mathrm{high}}^\ep$ are the corresponding cutoff $L^2$-masses,
the IMS identity and \eqref{eq:rem-C2-barrier} give
\[
 E(\ep)\geq\widetilde E_{\mathrm{low}}^D m_{\mathrm{low}}^\ep
             +\widetilde E_{\mathrm{high}}^D m_{\mathrm{high}}^\ep
             -Ce^{-s/\ep},
 \qquad
 m_{\mathrm{low}}^\ep+m_{\mathrm{high}}^\ep=1+O(e^{-s/\ep}).
\]
Since $E(\ep)\leq\widetilde E_{\mathrm{low}}^D$ and the gap is at
least $c_0\ep^\beta$, it follows that
\[
 c_0\ep^\beta m_{\mathrm{high}}^\ep\leq Ce^{-s/\ep}.
\]
Absorbing the polynomial factor into the exponential proves that the
mass in the higher-energy well is exponentially small. Thus the
$\ep^2$ gap in the statement of \cite[Lemma~3.1]{LTY} is replaced here
by the gap $c_0\ep^\beta$; the proof uses it in precisely this inequality.

Finally, the passage from these mass estimates to the pointwise ratio
in \cite[proof of Lemma~3.1, Step~3, equations~(32)--(35)]{LTY} is
uniform for $W_\ep$. On each fixed rescaled interval $|y|\leq R$,
\[
 \left|\frac{W_\ep(x_i+\sqrt\ep\,y)}\ep\right|
 \leq C_R+C\ep^{\alpha-1},
 \qquad 0 \le \frac{E(\ep)}\ep\leq C.
\]
Hence the rescaled local estimates and Harnack inequality have uniform
constants. Also, the eigenfunction energy identity gives
\[
 \int_{\T}V|\Psi^\ep|^2\,dx
 \leq E(\ep)+\lambda\|z^\ep\|_{L^\infty}\leq C\ep.
\]
Together with the mass concentration in the lower-energy well and its
quadratic lower bound, this places a fixed positive amount of mass
within distance $R\sqrt\ep$ of that well for some fixed large $R$.
As in the cited Step~3, the lower-well value is therefore at least
$c\ep^{-1/4}$, whereas the higher-well value is exponentially small
up to a polynomial factor. Consequently, there exist $C,\sigma>0$
independent of $k$ such that
\begin{equation*}\label{eq:rem-C2-ratios}
 \frac{U^{\ep_k^+}(0)}{U^{\ep_k^+}(a)}
      \leq Ce^{-\sigma/\ep_k^+},
 \qquad
 \frac{U^{\ep_k^-}(a)}{U^{\ep_k^-}(0)}
      \leq Ce^{-\sigma/\ep_k^-}.
\end{equation*}
Recalling that $z^\ep=-\ep\log U^\ep$, we conclude 
\[
 \limsup_{k\to\infty}
       \bigl(z^{\ep_k^+}(a)-z^{\ep_k^+}(0)\bigr)\leq-\sigma<0,
 \qquad
 \liminf_{k\to\infty}
       \bigl(z^{\ep_k^-}(a)-z^{\ep_k^-}(0)\bigr)\geq\sigma>0.
\]
Thus, $\{z^\ep\}$ does not converge in $C(\T)$. 
\end{proof}

\section{Concentration of the Gibbs measures}\label{sec:cole-hopf}

\begin{proof}[Proof of Theorem {\rm\ref{thm:measure}}]

We first note that adding constants to $w^\ep$ does not change $\rho^\ep$ defined in \eqref{eq:rho}. 

When $\alpha=1$, thanks to Theorem~\ref{thm:critical-case}, $\widetilde w^\ep:=w^\ep$ converges to $v^*$ uniformly in $\T^n$ as $\ep \to 0$, and $v^*$ has a unique global minimum at $x_j$. 

When $\alpha>1$, in light of Theorem~\ref{thm: sum up}, $\widetilde w^\ep:=w^\ep+\frac{c(\ep)}{\ep^\al}$ converges to $d(x_j,\cdot)$ uniformly in $\T^n$ as $\ep \to 0$, and again, $d(x_j,\cdot)$ has a unique global minimum at $x_j$.

In either case, let $f$ denote the uniform limit of $\{\widetilde w^\ep\}$.
Fix a neighborhood $U$ of $x_j$. Since $x_j$ is the unique global
minimum of $f$, there exists $\delta>0$ such that
\[
    f(x)\ge f(x_j)+4\delta
    \qquad\text{for }x\in U^c.
\]
By continuity of $f$, we may choose $r>0$ such that
$B(x_j,r)\subset U$ and
\[
    f(x)\le f(x_j)+\frac{\delta}{2}
    \qquad\text{for }x\in B(x_j,r).
\]
By uniform convergence, for all sufficiently small $\ep>0$,
\[
\begin{cases}
\widetilde w^\ep(x)\ge
\widetilde w^\ep(x_j)+2\delta,
& x\in U^c,\\[1mm]
\widetilde w^\ep(x)\le
\widetilde w^\ep(x_j)+\delta,
& x\in B(x_j,r).
\end{cases}
\]
Therefore, as $\ep\to 0$,
\[
    \frac{\int_{U^c}e^{-\widetilde w^\ep/\ep}\,dx}{\int_{\T^n}e^{-\widetilde w^\ep/\ep}\,dx}\le C e^{-\delta/\ep}\longrightarrow0,
\]
which proves \eqref{eq:rho-concentration-intro} for $\alpha\ge1$.

We now consider the subcritical case $0<\alpha<1$.
Recall that $\lambda=\ep^\alpha$. 
Fix $\delta>0$ sufficiently small that $\tr(A_j+\delta I)<\tr(A_i-\delta I)$ for every $i\ne j$. 
The comparison in Section~\ref{sec:subcritical} gives, for $|y|<r$,
\[
    w^\ep(x_j+y)\le \frac{\ep}{\lambda}\tr P_\lambda(A_j+\delta I)+\frac12y\cdot P_\lambda(A_j+\delta I)y,
\]
where $P_\lambda(B)$ is the matrix given by \eqref{def:matrix}
and, for $i\ne j$,
\[
    w^\ep(x_i+y)\ge \frac{\ep}{\lambda}\tr P_\lambda(A_i-\delta I)+\frac12y\cdot P_\lambda(A_i-\delta I)y.
\]
Consequently,
\[
    \int_{\T^n}e^{-w^\ep/\ep}\,dx\ge c\ep^{n/2}\exp\left(-\frac{\tr P_\lambda(A_j+\delta I)}{\lambda}\right),
\]
while for $i\ne j$,
\[
    \int_{B(x_i,r)}e^{-w^\ep/\ep}\,dx\le C\ep^{n/2}\exp\left(-\frac{\tr P_\lambda(A_i-\delta I)}{\lambda}\right).
\]
Since $P_\lambda(B)\to B$ as $\lambda\to0$, the ratio of the latter quantity to the former tends to $0$. 
On $E=\T^n \setminus \bigcup_{i=1}^m B(x_i,r)$, we have $u^0>0$.
As $w^\ep\to u^0$ uniformly thanks to \eqref{eq:subcritical-global-rate},  $w^\ep\ge a>0$ on $E$ for small $\ep$. Hence, 
\[
    \frac{\int_E e^{-w^\ep/\ep}\,dx}{\int_{\T^n}e^{-w^\ep/\ep}\,dx}
    \le C\ep^{-n/2}\exp\left(-\frac a\ep\right)\longrightarrow0.
\]
Thus, \eqref{eq:rho-concentration-intro} holds also for $0<\alpha<1$.
\end{proof}

\begin{rem}
The results in this paper suggest several natural directions for further study.
\begin{enumerate}[(i)]
\item It would be interesting to understand the corresponding selection problem when the wells of $V$ are degenerate. In this case, $c(\ep)$ is no longer expected to be of order $\ep$, and the critical balance between discount and viscosity should depend on the order of degeneracy.

\item Suppose that the minimum of $\kappa_i$ is attained at more than one
well. For a fixed $\alpha>1$, can one characterize the subsequential limits of $z^\ep$ as $\ep\to0$? In particular, under what additional assumptions on $V$
does the normalized family converge?

\item One may ask for an analog of Theorem \ref{thm:critical-case} for a Tonelli Hamiltonian whose Aubry set consists of finitely many hyperbolic equilibria or periodic orbits. For the pure vanishing viscosity problem, the local second-order information is related to the stable bundle of the linearized Hamiltonian flow; see \cite{AIPSM}. It would be interesting to determine how these local quantities interact with the vanishing discount term, and whether they lead to an analog of $v^*(x)$.

\item Finally, for a general coercive and convex Hamiltonian $H\in C^2(\T^n\times \R^n)$, is the family $\{v^\ep\}_{\ep\in (0,1)}$ convergent as $\ep \to 0$? Here, $v^\ep$ solves
\[
\ep v^\ep + H(x, Dv^\ep) - \ep \Delta v^\ep =0 \qquad \text{in } \T^n.
\]
\end{enumerate}
\end{rem}

\appendix
\section{Some known results}\label{app:known-results}
\begin{lem}\label{lem:cell problem at local min}
Let $v\in C(\T^n)$ be a solution to the cell problem \eqref{eq:cell}.  If $v$ has a local min at $x_i$ for some $i\in\{1,\ldots,m\}$, then $v$ is $C^2$ in a neighborhood of $x_i$ and
\[
    D^2v(x_i)=A_i.
\]
\end{lem}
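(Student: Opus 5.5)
Since $V(x_i)=0$, $V\ge 0$ and $|Dv|^2=V$, the strategy is to show that near a local minimum $x_i$ the solution $v$ coincides with a local distance function to the well, $v(x)=v(x_i)+d(x_i,x)$. That distance function is then the generating function of the stable manifold of the hyperbolic equilibrium $(x_i,0)$ of the Hamiltonian flow of $H(x,p)=|p|^2-V(x)$. The stable manifold theorem then gives $C^2$ regularity, and the Riccati equation gives the Hessian.

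\emph{Step 1: local representation.} We have $v(x)-v(x_i)\le d(x_i,x)$ for every $x$, since subsolutions of \eqref{eq:cell} are dominated by $d$. For the reverse inequality, take $x$ near $x_i$ and use a backward calibrated curve $\gamma:(-\infty,0]\to\T^n$ with $\gamma(0)=x$ (weak KAM theory, or the representation formula of \cite{LMT,Hung-book}). Along $\gamma$ we have $v(x)-v(\gamma(-t))=\int_{-t}^0 L(\gamma,\dot\gamma)\,ds$. The $\alpha$-limit set of $\gamma$ lies in the Aubry set $\{x_1,\dots,x_m\}$. Because $x_i$ is a strict local minimum of $v$, a calibrated curve starting near $x_i$ cannot leave a small neighborhood: $v$ increases along $\gamma$ forward in time, so going backward it decreases, and the curve must stay in the sublevel set $\{v<v(x)\}$ component containing $x_i$. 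Hence $\gamma(-t)\to x_i$, which yields $v(x)=v(x_i)+d(x_i,x)$ near $x_i$. Strictness can be obtained by first showing that $v(x)\ge v(x_i)+d(x_i,x)$ fails only on a set excluded by the minimality, or by using $d(x_i,x)\ge c|x-x_i|^2$ together with the sublevel argument.

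\emph{Step 2: regularity.} The point $(x_i,0)$ is a hyperbolic fixed point of $\dot x=2p$, $\dot p=DV(x)$; its linearization has eigenvalues $\pm 2\times$(eigenvalues of $A_i$). The unstable manifold $W^u$, made of orbits converging to $(x_i,0)$ as $t\to-\infty$, is a $C^1$ Lagrangian graph $p=Du(x)$ over a neighborhood of $x_i$, tangent to $\{p=A_iy\}$. The corresponding $u$ is exactly $d(x_i,\cdot)$, since minimizers from $x_i$ are the projected orbits on $W^u$. A $C^1$ graph gives $u\in C^2$. Regularity is the delicate point: with $V\in C^2$, the vector field is only $C^1$, and the stable manifold theorem for $C^1$ vector fields still yields a $C^1$ manifold, which is enough.

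\emph{Step 3: the Hessian.} Differentiate $|Du|^2=V$ twice at $x_i$, using $Du(x_i)=0$. This gives $2(D^2u(x_i))^2=D^2V(x_i)$, so $D^2u(x_i)^2=A_i^2$. Since $x_i$ is a local minimum, $D^2u(x_i)\ge 0$, and the positive semidefinite square root is unique; hence $D^2v(x_i)=A_i$. The identity can be justified directly from the tangency of $W^u$ to the graph of the positive-definite solution of the Riccati equation.

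I expect Step 1 to be the main obstacle: showing that the backward calibrated curves from points near the strict local minimum actually converge to $x_i$ and not to another well.
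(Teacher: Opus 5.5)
Your proposal is correct and follows essentially the paper's route: $C^2$ regularity comes from the $C^1$ local (un)stable manifold of the hyperbolic equilibrium $(x_i,0)$ of the $C^1$ Hamiltonian vector field. The paper simply invokes the argument of \cite[Lemma~1]{AIPSM} for $\phi=-v$, which your calibrated-curve reduction to $v=v(x_i)+d(x_i,\cdot)$ spells out; then the second-order expansion $(D^2v(x_i))^2=A_i^2$ (phrase it as a Taylor expansion, since $|Du|^2$ is only $C^1$), together with $D^2v(x_i)\ge 0$, gives $D^2v(x_i)=A_i$. The strictness that your Step~1 needs, and that you only sketch, is immediate: testing the supersolution inequality with a constant shows every local minimum of $v$ lies in $\{V=0\}$, whose points are isolated.
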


\begin{proof}
Set $\phi=-v$. By the local stable-manifold argument in
\cite[Lemma~1]{AIPSM}, $\phi$ is $C^2$ near $x_i$. The same proof
applies for $V\in C^2$, since the associated Hamiltonian vector field
is then $C^1$ and its local stable manifold, whose graph represents
$D\phi$, is $C^1$. Since $x_i$ is a local minimum of $v$, $D^2v(x_i)\ge0$. A second-order
expansion of $|Dv|^2=V$ at $x_i$ gives
\[
    \bigl(D^2v(x_i)\bigr)^2
    =\frac12D^2V(x_i)=A_i^2.
\]
Hence $D^2v(x_i)=A_i$.
\end{proof}

\begin{thm}\label{thm:asymptotic-c-ep}
We have
\[
    \lim_{\ep\to0}\frac{c(\ep)}{\ep}=-\min_{1\le i\le m}\kappa_i.
\]
\end{thm}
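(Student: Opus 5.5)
Since $-c(\ep)$ is the principal eigenvalue of $-\ep^2\Delta+V$ on $\T^n$ (via the Hopf--Cole transform $\varphi=e^{-\chi^\ep/\ep}$ applied to \eqref{eq:ergodic}), the statement is the semiclassical harmonic approximation $E(\ep):=-c(\ep)=\ep\kappa_*+o(\ep)$. I would prove the two inequalities separately: an upper bound by constructing a sub/supersolution of the Hamilton--Jacobi equation near the best well, and a lower bound by an IMS localization argument for the Schr\"odinger operator. Both steps use only the quadratic expansion \eqref{eq:V-quadratic-expansion} and the trace formula $\ep\tr A$ for the ground state of $-\ep^2\Delta+y\cdot A^2y$.

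\emph{Upper bound, $\limsup_{\ep\to0}E(\ep)/\ep\le\kappa_*$.} Let $i$ be the index with $\kappa_i=\kappa_*$, fix $\delta>0$, and choose $r>0$ so that $V(x_i+y)\le y\cdot(A_i+\delta I)^2y$ on $B(0,r)$. Take the trial function $\varphi(x_i+y)=\zeta(y)\exp\bigl(-\tfrac{1}{2\ep}y\cdot(A_i+\delta I)y\bigr)$, with $\zeta$ a cutoff equal to $1$ on $B(0,r/2)$. The Rayleigh quotient gives
\[
E(\ep)\le\frac{\int \ep^2|D\varphi|^2+V\varphi^2}{\int\varphi^2}.
\]
For the Gaussian part one gets exactly $\ep\tr(A_i+\delta I)$ plus the error coming from $V-y\cdot(A_i+\delta I)^2y\le0$. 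The cutoff errors are $O(e^{-c/\ep})$. Hence $E(\ep)\le\ep(\kappa_*+n\delta)+o(\ep)$, and letting $\delta\downarrow0$ gives the upper bound.

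\emph{Lower bound, $\liminf_{\ep\to0}E(\ep)/\ep\ge\kappa_*$.} Take a partition of unity $\sum_{k=0}^m J_k^2=1$, where $J_i$ is supported in $B(x_i,2R\sqrt\ep)$ and equals $1$ on $B(x_i,R\sqrt\ep)$, and $J_0$ lives away from the wells. Then $\sum_k|DJ_k|^2\le C/(R^2\ep)$. The IMS formula yields
\[
\langle (-\ep^2\Delta+V)u,u\rangle=\sum_k\langle(-\ep^2\Delta+V-\ep^2\textstyle\sum_l|DJ_l|^2)J_ku,J_ku\rangle.
\]
The localization error is at most $C\ep/R^2$. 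On the support of $J_0$ we have $V\ge c R^2\ep$, because $V\ge c|y|^2$ near the wells and $V\ge c$ away from them; so for large $R$ this piece contributes at least $\ep\kappa_*\|J_0u\|^2$. On $B(x_i,2R\sqrt\ep)$ we have $V\ge y\cdot(A_i-\delta I)^2y$ for small $\ep$, so by the harmonic oscillator ground state bound on $\R^n$ that piece contributes at least $\ep\tr(A_i-\delta I)\|J_iu\|^2$. Summing and letting $\ep\to0$, then $R\to\infty$ and $\delta\to0$, gives $E(\ep)\ge\ep\kappa_*-o(\ep)$.

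The main technical obstacle is the bookkeeping in the lower bound. The cutoff scale $R\sqrt\ep$ has to be chosen so that three things hold at once: the IMS error $\ep/R^2$ is $o(\ep)$ after $R\to\infty$, the quadratic comparison with $\delta$ is valid on $B(x_i,2R\sqrt\ep)$, which is automatic since the radius tends to $0$, and the outer region beats $\ep\kappa_*$. An alternative would be a PDE proof using Lemma~\ref{lem:cell problem at local min} together with the $\theta$-perturbation argument of Lemma~\ref{lem:key-lower-bound} applied to $\chi^\ep$ at a minimum point. However, identifying which well is selected is circular there, so I would use the spectral route.
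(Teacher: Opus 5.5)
Your proof is correct, but it takes a genuinely different route from the paper's.

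\textbf{The paper's route.} The paper gives no spectral argument. It defers to the PDE proof in \cite[Appendix~A]{LTY}, which runs as follows:
\begin{enumerate}
\item Uniform subsequential limits $\chi$ of the normalized $\chi^\ep$ solve $|D\chi|^2=V$.
\item A minimum of $\chi$ sits at some well $x_j$. There Lemma~\ref{lem:cell problem at local min} (the stable-manifold regularity from \cite{AIPSM}) gives $\chi\in C^2$ with $D^2\chi(x_j)=A_j$.
\item A perturbation/comparison argument of the type in Lemmas~\ref{lem:key-lower-bound} and \ref{lem:well-bound} then identifies $\lim c(\ep)/\ep$.
\end{enumerate}

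\textbf{Your route and what each buys.} You work only with the linear operator $-\ep^2\Delta+V$. A Gaussian quasimode in the Rayleigh quotient gives the upper bound on $E(\ep)$. IMS localization at scale $R\sqrt\ep$ plus the harmonic-oscillator bound gives the lower bound; this is Simon's harmonic approximation. Your argument needs only the Taylor expansion \eqref{eq:V-quadratic-expansion} and no regularity theory for the cell problem, and it is easily made quantitative. The paper's route stays within viscosity methods and reuses the tools of Section~\ref{sec:main}. As a byproduct of the $\theta$-argument, it also shows that any well where a subsequential limit of $\chi^\ep$ has a local minimum must satisfy $\kappa_j=\kappa_*$.

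\textbf{The PDE route is not circular.} At a minimum $x_j$ of any subsequential limit, the $\theta$-argument yields $\lim c(\ep_k)/\ep_k=-\kappa_j\le-\kappa_*$ without knowing $j$ in advance. The reverse bound $-c(\ep)\le\ep(\kappa_*+n\delta)$ for small $\ep$ follows from a quadratic barrier at a minimizing well, exactly as in Lemma~\ref{lem:well-bound}, or from your Rayleigh quotient.

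\textbf{Two points to state explicitly in your write-up.}
\begin{enumerate}
\item Take $\delta<\lambda_{\min}(A_i)$, so that $A_i-\delta I>0$ and the oscillator ground energy is $\ep\tr(A_i-\delta I)$.
\item Note that the positive eigenfunction $e^{-\chi^\ep/\ep}$ is the ground state. Hence $E(\ep)$ is the bottom of the spectrum and the variational characterization applies.
\end{enumerate}
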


We refer the reader to \cite[Appendix A]{LTY} for a simple PDE proof of Theorem \ref{thm:asymptotic-c-ep}, whose argument extends directly to the present multidimensional setting.
Lemma~\ref{lem:cell problem at local min} provides the multidimensional
analog of the local second-order information used in
\cite[Lemma~A.2]{LTY}, and the argument of \cite[Lemma~A.4]{LTY}
then yields the result.
For regularity of the viscous effective Hamiltonian and an $O(\ep)$ estimate for its convergence in the vanishing-viscosity limit, see \cite{TuZhangEH}.

\section{Another proof of Proposition~\ref{prop:upper-bound} by stochastic control}\label{appendix: another proof}
\begin{proof}[Another proof of Proposition~ {\rm\ref{prop:upper-bound}}]
We give a second proof based on the stochastic control representation.  For
\eqref{eq:discount-viscous},
\[
    v^\ep(x)=\inf_{\beta}\mathbb E_x\left[
    \int_0^\infty e^{-\ep t}
    L(X_t,\beta_t)\,dt\right].
\]
Here,
\[
\begin{cases}
    dX_t=\beta_t\,dt+\sqrt{2\ep}\,dW_t,\\
    X_0=x,
\end{cases}
\]
where $W$ is a standard $n$-dimensional Brownian motion; see, for instance,
\cite{FS}.  In particular, since $L\ge0$, and the choice $\beta\equiv0$ gives
an upper bound, we have
\begin{equation}\label{eq:stochastic-crude-bound}
    0\le v^\ep\le \frac{\|V\|_{L^\infty(\T^n)}}{\ep}.
\end{equation}

We first estimate the value function near a fixed well $x_i$.  Fix
$\delta>0$ and put $B:=A_i+\delta I$.  
After reducing $r>0$, if necessary, for $|y|\le r$,
\begin{equation}\label{eq:stochastic-local-V-upper}
    V(x_i+y)\le y\cdot B^2y.
\end{equation}
For $|y|\le r/2$, use the feedback control
$\beta_t=-2B(X_t-x_i)$ until the first exit time
\[
    \tau_r:=\inf\{t\ge0:|X_t-x_i|=r\}.
\]
Writing $Y_t=X_t-x_i$ in the coordinate ball, we have, up to $\tau_r$,
\begin{equation}\label{eq:OU-local-control}
\begin{cases}
    dY_t=-2BY_t\,dt+\sqrt{2\ep}\,dW_t,\\
    Y_0=y. 
\end{cases}
\end{equation}
By \eqref{eq:stochastic-local-V-upper},
\[
    L(x_i+Y_t,-2BY_t)
    =Y_t\cdot B^2Y_t+V(x_i+Y_t)
    \le 2Y_t\cdot B^2Y_t.
\]
Set $\Phi(y)=\frac12y\cdot By$.  The generator of
\eqref{eq:OU-local-control} satisfies
\[
    \mathcal L_\ep\Phi(y)
    =-2y\cdot B^2y+\ep\tr B.
\]
Applying It\^o's formula to $e^{-\ep(t\wedge\tau_r)}\Phi(Y_{t\wedge\tau_r})$
and then letting $t\to\infty$ gives
\begin{align}
&\mathbb E_y\int_0^{\tau_r}e^{-\ep t}
       2Y_t\cdot B^2Y_t\,dt \notag\\
&\quad =\Phi(y)-\mathbb E_y\left[e^{-\ep\tau_r}\Phi(Y_{\tau_r})\right]
 +\tr B\,\mathbb E_y\left[1-e^{-\ep\tau_r}\right]
 -\ep\mathbb E_y\int_0^{\tau_r}e^{-\ep t}\Phi(Y_t)\,dt \notag\\
&\quad \le \Phi(y)+\tr B .
\label{eq:OU-running-cost}
\end{align}

We next show that the discounted exit term is negligible.  The solution of
\eqref{eq:OU-local-control} before $\tau_r$ is
\[
    Y_t=e^{-2Bt}y+Z_t,
    \qquad
    Z_t:=\sqrt{2\ep}\int_0^t e^{-2B(t-s)}\,dW_s.
\]
Since $B$ is positive definite, $\|e^{-2Bt}\|\le1$.  Thus, for
$|y|\le r/2$,
\[
    \{\tau_r\le T\}
    \subset
    \left\{\sup_{0\le t\le T}|Z_t|\ge r/2\right\}.
\]
We estimate the latter event on unit time intervals.  For an integer $k\ge0$
and $0\le s\le1$,
\[
    Z_{k+s}=e^{-2Bs}Z_k
    +\sqrt{2\ep}\,e^{-2Bs}M_s^{(k)},
    \qquad
    M_s^{(k)}:=\int_0^s e^{2Bu}\,dW_u^{(k)},
\]
where $W_u^{(k)}:=W_{k+u}-W_k$ is again a standard Brownian motion.  Let
$b_*:=\lambda_{\min}(B)$.  The covariance matrix of $Z_k$ is bounded above by
$\ep(2b_*)^{-1}I$.  After diagonalizing $B$ by an orthogonal matrix, the
coordinates of $M^{(k)}$ are continuous Gaussian martingales whose quadratic
variations on $[0,1]$ are bounded by
\[
    Q:=\max_{\mu\in\sigma(B)}\int_0^1e^{4\mu s}\,ds<\infty.
\]
The Gaussian tail bound and the reflection principle, followed by a union
bound over the $n$ coordinates, therefore give constants $C,c>0$, independent
of $k$ and $\ep$, such that
\begin{equation}\label{eq:OU-unit-exit}
    \mathbb P\left(\sup_{0\le s\le1}|Z_{k+s}|\ge r/2\right)
    \le Ce^{-c/\ep}.
\end{equation}
Indeed, if the event on the left occurs, then either $|Z_k|\ge r/4$ or
$\sup_{0\le s\le1}|M_s^{(k)}|\ge r/(4\sqrt{2\ep})$, and each of these two
events has the asserted bound.

For $0<\ep<1/2$, let $R_\ep=2|\log\ep|$.  Covering
$[0,R_\ep/\ep]$ by unit intervals and using \eqref{eq:OU-unit-exit}, we obtain
\[
    \sup_{|y|\le r/2}
    \mathbb P_y\left(\tau_r\le\frac{R_\ep}{\ep}\right)
    \le C\left(1+\frac{R_\ep}{\ep}\right)e^{-c/\ep}.
\]
Consequently,
\begin{align*}
    \sup_{|y|\le r/2}\mathbb E_y[e^{-\ep\tau_r}]
    &\le
    C\left(1+\frac{R_\ep}{\ep}\right)e^{-c/\ep}
    +e^{-R_\ep}
    =o(\ep).
\end{align*}
The dynamic programming principle, \eqref{eq:stochastic-crude-bound}, and
\eqref{eq:OU-running-cost} now yield, uniformly for $|y|\le r/2$,
\begin{equation}\label{eq:stochastic-local-value}
    v^\ep(x_i+y)
    \le \tr B+\frac12y\cdot By+o(1).
\end{equation}

We now fix $x\in\T^n$.  Given $\eta>0$, choose an absolutely continuous curve
$\gamma:[0,T]\to\T^n$ with $\gamma(0)=x$, $\gamma(T)=x_i$, and
\[
    \int_0^T L(\gamma(t),\dot\gamma(t))\,dt
    \le d(x,x_i)+\eta.
\]
On $[0,T]$ use the control $\beta_t=\dot\gamma(t)$.  Then
$X_t=\gamma(t)+\sqrt{2\ep}W_t$ modulo $\T^n$.  Since $T$ is fixed, dominated
convergence gives
\begin{equation}\label{eq:stochastic-path-cost}
    \mathbb E_x\int_0^T e^{-\ep t}
    L(X_t,\dot\gamma(t))\,dt
    \le d(x,x_i)+\eta+o(1).
\end{equation}
At time $T$, in a lift centered at $x_i$, write
$Y_T=\sqrt{2\ep}W_T$.  On $\{|Y_T|\le r/2\}$ we use
\eqref{eq:stochastic-local-value}, whereas on the complementary event we use
\eqref{eq:stochastic-crude-bound}.  Since
\[
    \mathbb E\left[\frac12Y_T\cdot BY_T\right]
    =\ep T\tr B
\]
and
\[
    \mathbb P(|Y_T|>r/2)\le Ce^{-c/\ep},
\]
we obtain
\begin{equation}\label{eq:stochastic-terminal-value}
    \mathbb E_x[v^\ep(X_T)]\le \tr B+o(1).
\end{equation}
Combining \eqref{eq:stochastic-path-cost} and
\eqref{eq:stochastic-terminal-value} in the dynamic programming principle,
\[
    v^\ep(x)
    \le \mathbb E_x\left[
    \int_0^T e^{-\ep t}L(X_t,\dot\gamma(t))\,dt
    +e^{-\ep T}v^\ep(X_T)\right],
\]
gives
\[
    \limsup_{\ep\to0}v^\ep(x)
    \le d(x,x_i)+\eta+\tr B.
\]
Finally let $\eta\downarrow0$ and $\delta\downarrow0$.  Since
$\tr B\to\tr A_i=\kappa_i$ and, by the reversibility of $L$,
$d(x,x_i)=d(x_i,x)$, we conclude that
\[
    \limsup_{\ep\to0}v^\ep(x)
    \le \kappa_i+d(x_i,x).
\]
Taking the minimum over $i$ proves Proposition~\ref{prop:upper-bound}.
\end{proof}

\section*{Acknowledgements}

The work of HM was partially supported by the JSPS grants: 
KAKENHI \#26K06861, \#25K07072, \#24K00531, \#26H02001. 
The work of PN was supported by the JSPS grant: KAKENHI \#26KF0103.
HVT is partially supported by NSF grant DMS-2348305.

\section*{Declarations}

\noindent {\bf Conflict of interest statement:} The authors state that there is no conflict of interest.

\medskip

\noindent {\bf Data availability statement:} Data sharing is not applicable to this article, as no datasets were generated or analysed during the current study.


\begin{thebibliography}{99}



\bibitem{AIPSM}
N.~Anantharaman, R.~Iturriaga, P.~Padilla, and H.~S\'anchez-Morgado,
\emph{Physical solutions of the Hamilton--Jacobi equation}, Discrete Contin. Dyn. Syst. Ser. B \textbf{5} (2005), no.~3, 513--528.


\bibitem{CGMT}
F.~Cagnetti, D.~Gomes, H.~Mitake, and H.~V.~Tran,
\emph{A new method for large time behavior of degenerate viscous
Hamilton--Jacobi equations with convex Hamiltonians},
Ann. Inst. H. Poincar\'e Anal. Non Lin\'eaire
\textbf{32} (2015), no.~1, 183--200.

\bibitem{CZ}
Q.~Chen and Z.-X.~Zhu,
\emph{A new selection problem for degenerate viscous Hamilton--Jacobi equations},
arXiv:2605.12996, 2026.

\bibitem{DFIZ}
A.~Davini, A.~Fathi, R.~Iturriaga, and M.~Zavidovique,
\emph{Convergence of the solutions of the discounted Hamilton--Jacobi equation},
Invent. Math. \textbf{206} (2016), no.~1, 29--55.

\bibitem{EvansAdjoint}
L.~C.~Evans,
\emph{Adjoint and compensated compactness methods for Hamilton--Jacobi PDE},
Arch. Ration. Mech. Anal. \textbf{197} (2010), no.~3, 1053--1088.

\bibitem{FS}
W.~H.~Fleming and H.~M.~Soner,
\emph{Controlled Markov Processes and Viscosity Solutions}, 2nd ed.,
Stochastic Modelling and Applied Probability, vol.~25, Springer, New York, 2006.


\bibitem{Gomes}
D.~A.~Gomes,
\emph{Generalized Mather problem and selection principles for viscosity
solutions and Mather measures},
Adv. Calc. Var. \textbf{1} (2008), no.~3, 291--307.


\bibitem{IMT}
H. Ishii, H. Mitake, H. V. Tran, 
\emph{The vanishing discount problem and viscosity Mather measures.  
Part 1: the problem on a torus}, 
 J. Math. Pures Appl. (9) 108 (2017), no. 2, 125--149.  


 \bibitem{ISM}
R.~Iturriaga and H.~S\'anchez-Morgado,
\emph{Limit of the infinite horizon discounted Hamilton--Jacobi equation},
Discrete Contin. Dyn. Syst. Ser. B \textbf{15} (2011), no.~3, 623--635.


\bibitem{JKM}
H. R. Jauslin, H. O. Kreiss, and J. Moser, 
\emph{On the forced Burgers equation with periodic boundary conditions}, Differential equations: La Pietra 1996 (Florence), Proc. Sympos. Pure Math., vol. 65, Amer. Math. Soc., Providence, RI, 1999, pp. 133–153.

 \bibitem{LMT}
 N. Q. Le, H. Mitake, H. V. Tran,
{Dynamical and Geometric Aspects of Hamilton-Jacobi and Linearized Monge-Amp\`ere Equations},
Lecture Notes in Mathematics 2183, Springer.

\bibitem{LTY}
Z.~Liu, H.~V. Tran, and Y.~Yu,
\emph{Nonexistence of vanishing-viscosity limits for mechanical Hamiltonian ergodic problems}, arXiv:2605.10478, 2026.

\bibitem{MitakeTran}
H.~Mitake and H.~V.~Tran,
\emph{Selection problems for a discount degenerate viscous Hamilton--Jacobi equation},
Adv. Math. \textbf{306} (2017), 684--703.


\bibitem{MitakeSoga}
H.~Mitake and K.~Soga,
\emph{Weak KAM theory for discounted Hamilton--Jacobi equations and its application},
Calc. Var. Partial Differential Equations \textbf{57} (2018), no.~3,
Art.~78, 32 pp.



\bibitem{NiRate}
P.~Ni,
\emph{Sharp convergence rates for the vanishing discount problem with hyperbolic Aubry sets},
arXiv:2609.02779, 2026.

\bibitem{NYZ}
P.~Ni, J.~Yan, and M.~Zavidovique,
\emph{Static class-guided selection of elementary solutions in non-monotone vanishing discount problems},
arXiv:2602.09697, 2026.


\bibitem{Hung-book}
H. V. Tran, Hamilton--Jacobi equations: theory and applications.
Graduate Studies in Mathematics, 213. American Mathematical Society.


\bibitem{TranAdjoint}
H.~V.~Tran,
\emph{Adjoint methods for static Hamilton--Jacobi equations},
Calc. Var. Partial Differential Equations \textbf{41} (2011), 301--319.


\bibitem{TuZhangEH}
S.~N.~T.~Tu and J.~Zhang,
\emph{On the regularity of stochastic effective Hamiltonian},
Proc. Amer. Math. Soc. \textbf{153} (2025), 1191--1203.

\bibitem{WZ}
Z.~Wang and J.~Zhang,
\emph{On the vanishing viscosity limit of Hamilton--Jacobi equations with nearly optimal discount}, SIAM J. Math. Anal. \textbf{58} (2026), no.~5, 4641--4669.


\bibitem{Yu}
Y.~Yu,
\emph{A remark on the semi-classical measure from
$-\frac{h^2}{2}\Delta+V$ with a degenerate potential $V$},
Proc. Amer. Math. Soc. \textbf{135} (2007), no.~5, 1449--1454.

\end{thebibliography}
\end{document}